\newtheorem{lemma}{Lemma} 
\newtheorem{teor}{\sc {\textbf {Theorem}}}
\newtheorem{defin}{\sc {\textbf {Definition}}}
\newtheorem{coro}{\sc {\textbf {Corollary}}}
\newcommand{\Dr}{\mathbb{R}}
\newcommand{\F}{\mathcal{F}}
\newcommand{\R}{\mathcal{R}}
\newcommand{\Df}{\mathsf{F}}
\newcommand{\Or}{\mathcal{O}}
\newcommand{\Dz}{\mathbb{Z}}
\title{Shadowing for codimension one sectional-Anosov flows}
\author[Arbieto, A., L\'opez, A.M., S\'anchez, Y.]{Arbieto, A., L\'opez, A.M., S\'anchez, Y.}
\address{Arbieto, A. \\
Instituto de Matem\'{a}tica \\ 
Universidad Federal do Rio de Janeiro, Rio de Janeiro, Brasil} 
\email{	alexander.arbieto@gmail.com}
\address{L\'{o}pez, A.M. \\
Departamento de Matem\'{a}tica \\ 
Universidad Federal Rural do Rio de Janeiro, Rio de Janeiro, Brasil} 
\email{andresmlopezb@gmail.com, barragan@pg.im.ufrj.br}
\address{S\'{a}nchez, Y. \\
Departamento de Matem\'{a}ticas \\ 
Universidad Nacional de Colombia, Bogot\'{a}, Colombia} 
\email{yasanchezr@unal.edu.co}
\date{\today} 
\keywords{Sectional Anosov flow, shadowing, pseudo-orbit} 
\begin{document} 

\begin{abstract}
In hyperbolic dynamics, a well-known result is that every hyperbolic attracting set, have a finete pseudo-orbit tracing property (FPOTP). It's natural to wonder if this result is maintained in the sectional-hyperbolic dynamics; Komuro in \cite{komu}, provides a negative answer for this question, proving that the geometric Lorenz Attractor doesn't have a  FPOTP. In this paper, we generalized the result of Komuro, we prove that every codimension one sectional-hyperbolic attractor set with a unique singularity Lorenz-like, which is of boundary-type, does not have FPOTP.

\end{abstract}

\maketitle 

\section{Introduction}

In the dynamic hyperbolic theory, we have Anosov diffeomorphisms have the {\em shadowing property or  pseudo-orbit tracing property (POTP)} and furthermore that any pseudo-orbit has a unique point shadowing it. Thus, the shadowing property turn very useful, where this one subsequently appears in several hyperbolic systems, Markov partitions, symbolic dynamic between others.[referencia clarck robinson]. On the other hand, the Anosov flows also  satisfy the shadowing property. \\

In Komuro, variations of shadowing appeared in the literature driven by different approaches in itself, where he finds some relations between them, to solve the property existence and using it.\\

The sectional-hyperbolic sets are a more general class than hyperbolic sets, since they include these and other non-hyperbolic sets as the geometric Lorenz attractor. Therefore, it is relevant to study which properties valid for hyperbolic sets are also satisfied by the sectional-hyperbolic sets. \\

Recently in \cite{ww}, it proved that every chain transitive sectional-hyperbolic set with singularities does not have WPOTP.\\

Below we will specify the definitions and results over sectional-hyperbolic dynamics that we will use in this paper.\\

Hereafter $M$ will be a compact manifold  possibly with nonempty boundary endowed  with a Riemannian metric $\langle\cdot,\cdot\rangle$ an induced norm $||\cdot||$. Given $X$ a $C^1$ vector field, inwardly transverse to the boundary (if nonempty) we call $X_t$ its induced {\em flow} over $M$. Define the {\em maximal invariant set} of $X$ as:
$$
M(X)=\displaystyle\bigcap_{t\geq0}X_t(M).
$$
The \textit{orbit} of a point $p \in M(X)$ is defined by $\mathcal{O}(p)=\{X_t(p)\,|\,t\in\mathbb{R}\}$.
A {\em singularity} will be a zero $q$ of $X$, i.e. $X(q)=0$ (or equivalently $\mathcal{O}(q)=\{q\}$)
and a {\em periodic orbit} is an orbit $\mathcal{O}(p)$ such that $X_T(p)=p$ for some minimal $T>0$ and $\mathcal{O}(p)\neq \{p\}$. By a {\em closed orbit}
we mean a singularity or a periodic orbit.\\

Given $p\in M$ we define the {\em omega-limit set},
$\omega_X(p)=\{x\in M\,|\,x=\lim_{n\rightarrow\infty}X_{t_n}(p)$ for some sequence $t_n\rightarrow\infty\}$, if $p \in M(X)$, define the {\em alpha-limit set}
$\alpha_X(p)=\{x\in M:x=\lim_{n\to{\infty}} X_{-t_n}(p),
$ for some sequence $ t_n\to \infty\}$.\\

A compact subset $\Lambda$ of $M$ is called {\em invariant} if
$X_t(\Lambda)=\Lambda$ for all $t\in\mathbb{R}$; {\em transitive} if $\Lambda = \omega_X(p)$ for some $p\in \Lambda$. A compact invariant set $\Lambda$ is {\em attracting} if there is a neighborhood $U$ such that
\[\Lambda=\cap_{t \geq 0}X_t(U),\] 
and is {\em attractor} of $X$, if is an attracting set $\Lambda$  which is {\em transitive}. On the other hand, a compact invariant set $\Lambda$ is {\em Lyapunov stable}, if for all neighborhood $U$ of $\Lambda$, there exists a neighborhood $W$ such that: $X_t(p)\in U$ for every $t\geq 0$ and $p\in W$.

\subsection{Hyperbolic and Sectional-hyperbolic sets}

\begin{defin}
A compact invariant set $\Lambda \subseteq M(X)$ is {\em hyperbolic} if there are positive constants $K,\lambda$and a continuous $DX_t$-invariant splitting of the tangent bundle  $T_{\Lambda}M=E^s_{\Lambda}\oplus E^X_{\Lambda}\oplus E^u_{\Lambda}$,  
 such that for every $x \in \Lambda$ and $t \geq 0$:
\begin{enumerate}
\item [$(1)$] $\| DX_t(x)v^s_x \| \leq K e^{-\lambda t}\| v^s_x \|,\ \ \forall v^s_x \in E^s_x$; 
\item [$(2)$] $ \| DX_t(x)v^u_x \| \geq K^{-1} e^{\lambda t} \| v^u_x \|,\ \ \forall v^u_x \in E^u_x$; 
\item [$(3)$] $E^{X}_{x}=\left\langle X(x)\right\rangle $.
\end{enumerate}
\end{defin}

If $E^s_x\neq 0$ and $E^u_x\neq 0$ for all $x\in \Lambda$ we will say that $\Lambda$
is a {\em saddle-type hyperbolic set}. A closed orbit is hyperbolic, if as a compact invariant set of $X$ is hyperbolic. We will say that a regular point $p$ is hyperbolic if $\omega(p) \cup \Or(P) \cup \alpha(p)$ is a hyperbolic set.\\

The invariant manifold theory \cite{hps} asserts that if $H\subseteq M$ is hyperbolic set of $X$ and $p \in H$, then the topological sets:
$$W^{ss}(p)=\{q \in M: \lim_{t\to\infty} d(X_t(q),X_t(p))=0\}$$
and
$$W^{uu}(p)=\{q \in M(X): \lim_{t\to- \infty} d(X_t(q),X_t(p))=0\}$$
are $C^1$ manifolds in $M$, so called strong stable and unstable manifolds, tangent at $p$ to the subbundles
$E^s_p$ and $E^u_p$ respectively. Saturating them with the flow we obtain the stable and unstable 
manifolds $W^{s}(p)$ and $W^{u}(p)$ respectively, which are invariant. If $p, p' \in H$, we have that 
$W^{ss}(p)$ and $W^{ss}(p')$ are the same or they are disjoint (similarly for $W^{uu}$).\\

\begin{defin}
A compact invariant set $\Lambda \subseteq M(X)$ is {\em sectional-hyperbolic} if every singularity in $\Lambda$ is hyperbolic (as invariant set) and there are a continuous $DX_t$-invariant splitting of the tangent bundle $T_{\Lambda} M = \Df^s_{\Lambda}\oplus \Df^c_{\Lambda}$, and positive constants $K,\lambda$
such that for every $x \in \Lambda$ and $t \geq 0$:

\begin{enumerate}
\item
$\| DX_t(x)v^s_x \| \leq K e^{-\lambda t}\| v^s_x \| ,\ \ \forall v^s_x \in \mathsf{F}^s_x$.
\item
$\| DX_t(x)v^s_x \| \cdot \| v^c_x \|  \leq K e^{-\lambda t} 
\| DX_t(x)v^c_x \| \cdot \| v^s_x \|,\ \ \forall v^s_x \in \mathsf{F}^s_x,\ \ \forall v^c_x \in \mathsf{F}^c_x$.
\item
$ \| DX_t(x)u^c_x , DX_t(x)v^c_x  \|_{X_t(x)} \geq K^{-1} e^{\lambda t}  \| u^c_x , v^c_x \|_x, \ \ 
\forall u^c_x , v^c_x \in \mathsf{F}^c_x$.\\
Where $||\cdot, \cdot ||_x$ it is induced $2$-norm by the Riemannian metrics 
$\langle \cdot, \cdot \rangle_x$ of $T_x\Lambda$, given by
$$||v_x,u_x||_x=\sqrt{\langle v_x,v_x \rangle_x \cdot \langle u_x,u_x\rangle_x - \langle v_x,u_x 
\rangle_x^2}$$ 
for all $x \in \Lambda$ and every $u_x,v_x\in T_x\Lambda$ .
\end{enumerate}
\end{defin}

The third condition guarantees the increase exponential of the area of parallelograms in the central subbundle $\mathsf{F}^c$. Since $ X(x) \in \Df^c_x$ for all $x \in \Lambda$ (see Lemma 4 in \cite {bm}), we will require that the dimension of the central subbundle must be greater than or equal to $2$. In the particular case where the $ dim (\mathsf{F}^c_x) = 2$ we will say that $\Lambda$ is a sectional-hyperbolic set of \textit{codimension $1$} or codimension one sectional-hyperbolic set. \\

Also the invariant manifold theory \cite{hps} asserts that through any point $x$ of a sectional-hyperbolic set $\Lambda$ it passes a 
strong stable manifold $\F^{ss}(x)$, tangent at $x$ to the subbundle $\Df^s_x$, which induces a foliation over a neighborhood of $\Lambda$, that we will call $U_\Lambda$. Saturating them with the flow we obtain the invariant manifold $\F^s(x)$.\\

Unlike hyperbolic sets, the sectional-hyperbolic sets can have regular orbits which accumulate singularities. We have:

\begin{lemma}\label{lema1}
If $\Lambda \subseteq M(X)$ is sectional-hyperbolic set, and $\sigma$ is a singularity in $\Lambda$ then:
$$\F^{ss}(\sigma) \cap \Lambda = \{\sigma\}$$
\end{lemma}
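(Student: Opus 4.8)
The plan is to argue by contradiction: suppose there exists a point $q \in \F^{ss}(\sigma) \cap \Lambda$ with $q \neq \sigma$. The first observation is that $\F^{ss}(\sigma)$ is forward-invariant under the flow and every orbit in it converges to $\sigma$ in positive time; hence $\sigma \in \omega_X(q)$. Since $\Lambda$ is invariant and compact, the whole forward orbit $\{X_t(q) : t \geq 0\}$ and its accumulation points lie in $\Lambda$. So far nothing is wrong, so I need to extract a contradiction from the sectional-hyperbolic estimates combined with the fact that $q$ is a regular point (it must be regular: if it were a singularity, $\F^{ss}(\sigma)\cap\F^{ss}(q)$ would be a single strong stable leaf shared by two singularities, which forces $q=\sigma$ by the disjointness property for strong stable leaves).

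The heart of the argument is a volume/area estimate in the central subbundle $\Df^c$. Since $q$ is regular, $X(q) \neq 0$ lies in $\Df^c_q$. Pick a second vector $v^c_q \in \Df^c_q$ independent of $X(q)$; then condition (3) in the definition of sectional-hyperbolicity gives
\[
\| DX_t(q)X(q),\, DX_t(q)v^c_q \|_{X_t(q)} \geq K^{-1} e^{\lambda t} \| X(q),\, v^c_q \|_q,
\]
so the area of the parallelogram spanned by these two vectors grows without bound as $t \to \infty$. On the other hand, $X_t(q) \to \sigma$, so by continuity of the splitting $\Df^c$ on the compact set $\Lambda$, the images $DX_t(q)X(q) = X(X_t(q))$ and $DX_t(q)v^c_q$ lie in $\Df^c_{X_t(q)}$, which converges to the $2$-dimensional space $\Df^c_\sigma$. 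But $\| X(X_t(q)) \| = \| X(X_t(q)) \| \to \| X(\sigma) \| = 0$, and $\| DX_t(q) v^c_q \|$ is bounded above on the compact set $\Lambda$ (the derivative cocycle is bounded on $[0,\infty)$? — no, this is exactly the delicate point). The cleaner route is: the $2$-norm is bounded by $\| X(X_t(q)) \| \cdot \| DX_t(q) v^c_q \|$, and one controls the second factor by noting it stays in a fixed compact region while the vector field value in the first factor tends to $0$; a more careful version uses condition (2) together with the hyperbolicity of $\sigma$ to bound the growth of any central vector near $\sigma$ polynomially or by a controlled exponential, against which the factor $\|X(X_t(q))\| \to 0$ wins. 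Either way the right-hand side $K^{-1}e^{\lambda t}\|X(q),v^c_q\|_q \to \infty$ while the left-hand side stays bounded (in fact $\to 0$), a contradiction.

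I would organize the write-up as: (i) reduce to $q$ regular using leaf-disjointness; (ii) record that $X_t(q) \to \sigma$ and hence $\|X(X_t(q))\| \to 0$; (iii) choose $v^c_q$ transverse to $X(q)$ in $\Df^c_q$ and apply condition (3) to get exponential area growth; (iv) bound the area from above using $\|u,v\|_x \leq \|u\|\|v\|$ and the fact that near the hyperbolic singularity $\sigma$ the norm of $DX_t(q)v^c_q$ cannot outpace $e^{\lambda t}$ fast enough to compensate $\|X(X_t(q))\|\to 0$ — here condition (2) relating the $\Df^s$ and $\Df^c$ growth, plus the known eigenvalue structure at $\sigma$, is the technical input; (v) conclude. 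The main obstacle is precisely step (iv): making rigorous that the upper bound on the parallelogram area near $\sigma$ beats the forced exponential lower bound. I expect the clean way to handle this is to work in local coordinates near $\sigma$ where $X$ is close to its linear part and the $\Df^c$ directions correspond to the center-unstable eigenspaces, then estimate directly; alternatively, invoke that $\F^{ss}(\sigma)$ is the full stable manifold of $\sigma$ when $\sigma$ has no center-stable directions inside $\Df^c$, so a regular orbit converging to $\sigma$ would have to enter along $\Df^c_\sigma$, contradicting $q \in \F^{ss}(\sigma)$ which is tangent to $\Df^s_\sigma$ and transverse to $\Df^c_\sigma$.
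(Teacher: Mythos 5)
Your reduction to a regular point $q$ and the observation that $X(q)\in\Df^c_q$ (Lemma 4 of \cite{bm}) are fine, but the core of your argument, step (iv), has a genuine gap, and it is exactly the point you yourself flag as ``delicate'' without resolving it. The contradiction you claim (``the right-hand side $K^{-1}e^{\lambda t}\|X(q),v^c_q\|_q\to\infty$ while the left-hand side stays bounded, in fact $\to 0$'') is not established and is false in general. The crude bound $\|u,v\|_x\leq\|u\|\,\|v\|$ gives at best
$\|X(X_t(q))\|\cdot\|DX_t(q)v^c_q\|\lesssim e^{-ct}\cdot e^{\mu t}$,
where $c$ is comparable to the stable rates at $\sigma$ and $\mu$ can be as large as the strongest unstable eigenvalue of $DX(\sigma)$. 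Nothing in the definition forces $\mu-c<\lambda$: if $\sigma$ has a very strong unstable eigenvalue (say rates $-10,-1,100$, where the constant $\lambda$ of the splitting is at most about $9$), the area of the parallelogram spanned by the shrinking vector $X(X_t(q))$ and the expanding vector $DX_t(q)v^c_q$ can grow like $e^{90t}$, which is perfectly compatible with the lower bound $e^{\lambda t}$ of condition (3). So the sectional (area) expansion alone never produces the contradiction; the obstruction is directional, not area-theoretic, and appealing vaguely to condition (2) ``plus the eigenvalue structure at $\sigma$'' does not close the gap.

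The paper itself does not argue this way: it simply cites Corollary 2 of \cite{bm}. The short correct argument behind that corollary is the one your final sentence gropes toward but misstates: since the leaf $\F^{ss}(\sigma)$ is flow-invariant, $X(X_t(q))\in T_{X_t(q)}\F^{ss}(\sigma)$ for all $t$, and as $X_t(q)\to\sigma$ these tangent spaces converge to $\Df^s_\sigma$ (the leaf is a $C^1$ manifold tangent to $\Df^s_\sigma$ at $\sigma$); on the other hand $X(X_t(q))\in\Df^c_{X_t(q)}$ by Lemma 4 of \cite{bm}, and by continuity of the dominated splitting $\Df^c$ is uniformly transverse to $\Df^s$ near $\sigma$. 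A nonzero vector cannot lie in two uniformly transverse subspaces, contradiction. Note also that your ``alternative'' route is wrong as phrased: a regular orbit inside $\F^{ss}(\sigma)$ approaches $\sigma$ tangentially to $\Df^s_\sigma$, not along $\Df^c_\sigma$, and restricting to the case where $\Df^c_\sigma$ contains no stable direction would exclude precisely the Lorenz-like singularities that this paper is about.
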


\begin{proof}
See corollary 2 in \cite{bm}.
\end{proof}

Every singularity $\sigma$ in an sectional-hyperbolic set, is hyperbolic, so it's invariant manifolds by the hyperbolic structure, $W^{uu}(\sigma)$ and $W^{ss}(\sigma)$ are well defined. The strong stable manifold  by the sectional-hyperbolic structure $\F^{ss}(\sigma)$, is a submanifold of $W^{ss}(\sigma)$, with respect to your dimension, exists two possibilities:

\begin{enumerate}
\item $dim(W^{ss}(\sigma)) = dim(\F^{ss} (\sigma))$, in this case $W^{ss}(\sigma) = \F^{ss} (\sigma)$.
\item $dim(W^{ss}(\sigma)) = dim(\F^{ss}(\sigma)) + 1$, in this case, we say that the singularity is {\em Lorenz-like}.
\end{enumerate}

Every Lorenz-like singularity $\sigma$, is type-saddle hyperbolic set with at least two negative eigenvalues, one of which is real eigenvalue $\lambda_\sigma$ with multiplicity one such that the real part of the other eigenvalues are outside the closed interval
$[\lambda_\sigma, -\lambda_\sigma]$.\\

We have that a cross section $\Sigma \subset U_\Lambda$ is associated to a Lorenz-like singularity $\sigma$ in a sectional-hyperbolic set $\Lambda$, if $\Sigma$ is very close to $\sigma$, $\Sigma$ intersect one of the stable regular orbits of $\sigma$ associated with the eigenvalue
$\lambda_\sigma$ and $\partial^h \Sigma \cap \Lambda = \emptyset$, \cite{m}, \cite{al}. We denote by $l^*_\Sigma$ the leaf of $\Sigma$ that contains the intersection point. 
\\

Let $\Lambda$ be a sectional-hyperbolic set, $\sigma \in \Lambda$ a singularity Lorenz-like, a {\em singular cross section} associated to $\sigma$, consists of a pair of cross sections $\Sigma_t$ and $\Sigma_b$, 
associated  to $\sigma$ such that $\Sigma_t$ intersects one of 
the stable regular orbits of $\sigma$ associated with the eigenvalue $\lambda_\sigma$ and $\Sigma_b$ intersects the other stable regular orbit.\\

Over a Lorenz-like singularity $\sigma \in \Lambda$, we have $\F^{ss}(\sigma)$ is tangent to the subspace associated the eigenvalues with real part less than $\lambda_\sigma$, and $\F^{ss}(\sigma)$ divide to $W^{ss}(\sigma)$ in two connected components. If $\Lambda$ intersect just one the stable regular orbits of $\sigma$ associated with the eigenvalue $\lambda_\sigma$.

\begin{coro}\label{coroC7.5}
Every point of an attractor sectional-hyperbolic set $\Lambda$ of codimension $1$ with a unique singularity Lorenz-like which is of boundary type, can be approximated by points in $\Lambda$
for which the omega-limit set is a singularity. 
\end{coro}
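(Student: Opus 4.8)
The plan is to exploit the Lorenz-like singularity $\sigma$ together with the structure of the strong stable foliation. Recall from Lemma \ref{lema1} that $\F^{ss}(\sigma)\cap\Lambda=\{\sigma\}$, and that, since $\sigma$ is of boundary type, $\Lambda$ meets exactly one of the two stable regular orbits of $\sigma$ associated with the eigenvalue $\lambda_\sigma$. Fix a singular cross section associated to $\sigma$; on the relevant sheet $\Sigma$ the foliation $\F^{ss}$ induces a one-dimensional foliation whose leaf $l^*_\Sigma$ through the point $\Sigma\cap W^s_{\mathrm{reg}}(\sigma)$ is, by the previous discussion, disjoint from $\Lambda$ except possibly at a controlled set; the trace $\Lambda\cap\Sigma$ therefore accumulates on $l^*_\Sigma$ only from one side.

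First I would set up local coordinates near $\sigma$ in which $\F^{ss}(\sigma)$ is a coordinate hyperplane and $W^s(\sigma)$ is a coordinate plane containing the $\lambda_\sigma$-direction, so that points of $\Sigma\cap\Lambda$ close to $l^*_\Sigma$ flow into a small neighborhood of $\sigma$ and shadow the orbit of $\sigma$ for a long time before leaving. Next, given an arbitrary $x\in\Lambda$, I would use transitivity of the attractor (or, failing a direct transitivity hypothesis, Lyapunov stability together with the fact that $\Lambda$ is an attracting set and $\sigma\in\Lambda$) to find points of $\Lambda$ whose forward orbit enters any prescribed neighborhood of $\sigma$; pulling such a point back along the flow and intersecting with $\Sigma$, I obtain points $x_n\in\Lambda\cap\Sigma$ with $x_n\to l^*_\Sigma$. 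The third step is the heart of the matter: I would show that for $x_n$ sufficiently close to $l^*_\Sigma$, the forward orbit of $x_n$ stays near $\sigma$ — indeed is eventually trapped in the local stable manifold picture — so that $\omega_X(x_n)=\{\sigma\}$. This uses the Lorenz-like eigenvalue inequality (the real part of the other negative eigenvalues lies outside $[\lambda_\sigma,-\lambda_\sigma]$), which forces the local dynamics transverse to $\F^{ss}(\sigma)$ to dominate, pushing orbits that start near $l^*_\Sigma$ toward $\sigma$ along $W^s(\sigma)$.

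Finally I would transport this back: starting from an arbitrary $x\in\Lambda$, the orbit segment realizing the approach to $\sigma$ gives a point $y\in\Lambda$ near $\Sigma$, and perturbing $y$ inside $\Lambda$ toward $l^*_\Sigma$ (using that $\Lambda\cap\Sigma$ accumulates on $l^*_\Sigma$) yields points $y_n\in\Lambda$ with $y_n\to y$ and $\omega_X(y_n)=\{\sigma\}$; flowing $y_n$ backward to the time corresponding to $x$ produces the desired approximating sequence in $\Lambda$ for $x$ whose omega-limit is the singularity.

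The main obstacle I expect is the third step: making rigorous that proximity to the leaf $l^*_\Sigma$ inside $\Sigma$ actually forces the \emph{entire} forward orbit to converge to $\sigma$, rather than merely to shadow it for a long but finite time and then escape along the unstable direction. Controlling this requires a careful local analysis of the return map near the singular cross section — quantifying how the distance to $l^*_\Sigma$ contracts under the flow versus how the escaping coordinate expands — and invoking that $\sigma$ is of \emph{boundary type} so that $\Lambda$ lies entirely on the side of $\F^{ss}(\sigma)$ from which no escape is possible, i.e. the relevant unstable-type return never carries $\Lambda$-points away. Getting the quantifiers in the right order there, uniformly in $x$, is the delicate part; the rest is a packaging argument using invariance, continuity of the foliation, and the accumulation property of $\Lambda\cap\Sigma$ on $l^*_\Sigma$.
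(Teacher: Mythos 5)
The paper offers no internal argument for this corollary (it simply cites \cite{y}), so your outline has to stand on its own, and it does not: the failure is exactly at your third step. Proximity of a point $x_n\in\Lambda\cap\Sigma$ to the leaf $l^*_\Sigma$ does not force $\omega_X(x_n)=\{\sigma\}$. The only points whose omega-limit set is the singularity are those lying exactly on the stable manifold $W^s(\sigma)$; every nearby point of $\Lambda$ off $W^s(\sigma)$ merely shadows $\sigma$ for a long but finite time and is then expelled along the unstable separatrices, after which its omega-limit set can be anything in $\Lambda$. The Lorenz-like eigenvalue condition is a domination condition among the rates and does not trap such orbits, and the boundary-type hypothesis only says that $\Lambda$ meets just one of the two stable separatrices of $\sigma$ associated with $\lambda_\sigma$; it says nothing that would prevent escape along $W^{uu}(\sigma)$, so the phrase ``the relevant unstable-type return never carries $\Lambda$-points away'' has no support. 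You flagged this step as delicate, but it is not a matter of getting quantifiers in the right order: as stated, the step is false.

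What the statement actually requires is to produce, arbitrarily close to an arbitrary $x\in\Lambda$, points of $\Lambda$ lying \emph{on} $W^s(\sigma)$ (equivalently, points whose forward orbit hits $l^*_\Sigma$), i.e.\ to prove that $\Lambda\cap W^s(\sigma)$ is dense in $\Lambda$. This is where the codimension-one, unique-singularity and boundary-type hypotheses genuinely enter: one analyzes the return map to the singular cross sections and uses the expansion of the induced one-dimensional dynamics in the central direction to show that preimages of $l^*_\Sigma$ under the return map accumulate on every point of $\Lambda\cap\Sigma$, and then saturates by the flow; this is the content of the argument in \cite{y} that the paper invokes. Your remaining steps (local linearization near $\sigma$, using transitivity to bring orbits near $\sigma$, flowing back to approximate the given point) are reasonable packaging, but without a mechanism that places points of $\Lambda$ exactly on $W^s(\sigma)$ near any prescribed point, the proposal does not prove the corollary.
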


\begin{proof}
See \cite{y}.
\end{proof}

Another important result about the sectional-hyperbolic set, is the \emph{\textbf{hyperbolic lemma}} (see Lemma 9 in \cite{bm}), which asserts that any invariant compact subset $H$ without singularities of a sectional-hyperbolic set $\Lambda$,  is hyperbolic. In this case, we have that $\Df^s_H=E^s_H$ and $\Df^c_H=E^u_H \oplus E^X_H$, so $W^{ss}(p)=\F^{ss}(p)$ for all $p \in H$.\\

\subsection{Shadowing property}

Before starting to study the shadowing property, it is 
necessary to mention that there are several versions in the literature of the definition of shadowing, in this paper we will present the version of Komuro \cite{ko}, but we modify the definition for the context in that we work; in addition, Komuro present five type of shadowing. 

\begin{defin} \label{definC8.3}
Let $H\subseteq M$ be a compact invariant set, given $\delta,T>0$ a collection $\{x_{i}; t_{i}\}_{0}^{k}$, with $x_{i}\in H$ $t_{i}\geq T$ and $k \in \Dz^+$, 
is called a \textbf{ {\em finite $(\delta, T)$-chain}} of $H$, if $d(X_{t_i}(x_{i}),x_{i+1})\leq \delta$ for every $0\leq i\leq k-1$. For each $t \in [0, \sum_{i=0}^k t_i]$
we denote:
$$x_{0}*t=X_{t-{S_i}}(x_{i})$$ if $S_{i}\leq t<S_{i+1}$ for some $0\leq i\leq k$ where 
$$S_{i}=\begin{cases} 0  & \text{if } i=0 \\ 
\sum_{j=0}^{i-1}t_{j} & \text{if } 0<i\leq k+1 \end{cases}$$
\end{defin}

\begin{defin}\label{definC8.4}
Let $H\subseteq M$ be a compact invariant set, given $\delta,T>0$ a collection $\{x_{i}; t_{i}\}_\Dz$, with $x_{i}\in X$ and $t_{i}\geq T$
is called a \textbf{ {\em $(\delta, T)$-chain}} of $H$, if $d(X_{t_i}(x_{i}),x_{i+1})\leq \delta$ for every $i \in \Dz$. For each $t \in \Dr$ we define
$$x_{0}*t=\varphi_{t-{S_i}}(x_{i})$$ if $S_{i}\leq t<S_{i+1}$ where 
$$S_{i}=\begin{cases} 0  & \text{if } i=0 \\ 
\sum_{j=0}^{i-1}t_{j} & \text{if } i> 0 \\
-\left(\sum_{j=1}^{-i}t_{-j}\right) & \text{if } i< 0
\end{cases}$$
\end{defin}

We consider the following three sets of the reparametrizations:

$$Rep =\{g \in C( \Dr) : g \text{ is a strictly increasing with } g(0)=0\}$$
$$Rep^* =\{g \in Rep: g(\Dr)=\Dr\}$$
$$Rep(\epsilon)=\left\lbrace g\in Rep^*: \left\vert \frac{g(s)-g(t)}{s-t}-1 \right\vert \leq \epsilon \text{ for every } s,t \in \Dr \right\rbrace$$

Using this sets
, we define the concept of $\epsilon$-traced. 

\begin{defin}\label{definC8.5}
Let $H\subseteq M$ be a compact invariant set. A $(\delta, T)$-chain $\{x_{i} ; t_{i}\}_{i\in Z}$ of $H$ is said to be \textbf{ weakly $\epsilon$-traced} (resp. \textbf{ normal $\epsilon$-traced} or \textbf{strongly $\epsilon$-traced}) by a point $x\in H$
if there is a reparametrization $g\in Rep$ (resp. $g\in Rep^*$ or $g \in Rep(\epsilon)$ ) such that
$d(x_{0}*t, X_{g(t)}(x))\leq \epsilon$ for every $t\in \Dr$.
\end{defin}

\begin{defin}\label{definC8.6}
Let $H\subseteq M$ be a compact invariant set. A finite $(\delta, T)$-chain $\{x_{i} ; t_{i}\}_0^k$ of $H$ is said to be \textbf{weakly $\epsilon$-traced} (resp. \textbf{ normal $\epsilon$-traced} or \textbf{ strongly $\epsilon$-traced}) by a point $x\in H$
if there is a reparametrization $g\in Rep$ (resp. $g\in Rep^*$ or $g \in Rep(\epsilon)$) such that
$d(x_{0}*t, X_{g(t)}(x))\leq \epsilon$ for every $t\in \Dr$.
\end{defin}

Finally we establish the definition of the shadowing of Komuro, but we call \textbf{($\cdot$) P.O.T.P (pseudo orbit traced property)}.

\begin{defin}\label{definC8.7}
Let $H\subseteq M$ be a compact invariant set. $H$ has the \textbf{weak POTP} (resp. the \textbf{normal POTP} or the \textbf{strong POTP}) if for any $\epsilon>0$ there are $\delta, T>0$ such that every $(\delta, T)$-chain of 
$H$ can be weakly $\epsilon$-traced (resp. normal $\epsilon$-traced or strongly $\epsilon$-traced) by some point of $X$. In this case we say that $H$ 
have \textbf{WPOTP} (resp. \textbf{NPOTP} or \textbf{SPOTP}).
\end{defin}

\begin{defin}\label{definC8.8}
Let $H\subseteq M$ be a compact invariant set. $H$ has the \textbf{finite POTP} (resp. the \textbf{strong finite POTP}) if for any $\epsilon>0$ there are $\delta, T>0$ such that every finite $(\delta, T)$-chain of 
$H$ can be weakly $\epsilon$-traced (resp. strongly $\epsilon$-traced) by some point of $X$. In this case we say that $H$ 
have \textbf{FPOTP} (resp. \textbf{SFPOTP}).
\end{defin}

Note that we do not define normal finite POTP, this is because in the case of finite $(\delta,T)$-chains, the fact that a reparametrization
is or not surjective does not produce any change, that is, that normal finite POTP is the same that FPOTP.\\

The next Theorem establish the relation between the five definitions of POTP.

\begin{teor} \label{teorC8.2}
Let $H$ a compact invariant set of $X$, then following relations holds: 
\begin{center}
$\begin{array}{c c c c c}
SPOTP &  \Rightarrow & NPOTP & \Rightarrow & WPOTP \\
\Updownarrow & & & & \Downarrow\\
SFPOTP & & \Longrightarrow & & FPOTP 
\end{array}$
\end{center}
\end{teor}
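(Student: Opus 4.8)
The plan is to split the diagram into three pieces: two implications that are a formality given the inclusions $Rep(\epsilon)\subseteq Rep^*\subseteq Rep$, two that come from embedding a finite chain into a bi-infinite one, and the single substantial implication $SFPOTP\Rightarrow SPOTP$, which I would get by a compactness argument on finite truncations. For the formal piece: note that for $\epsilon<1$ a $g\in Rep(\epsilon)$ satisfies $(1-\epsilon)|s|\le|g(s)|\le(1+\epsilon)|s|$, whence $g(\mathbb{R})=\mathbb{R}$ and $g\in Rep^*$; and trivially $Rep^*\subseteq Rep$. Replacing $\epsilon$ by $\min(\epsilon,1/2)$ if necessary we may always assume $\epsilon<1$ in the definitions, and then a strong (resp. strong finite) $\epsilon$-tracing of a chain is, via the same point and the same reparametrization, a normal and hence a weak (resp. a weak finite) $\epsilon$-tracing. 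This gives $SPOTP\Rightarrow NPOTP\Rightarrow WPOTP$ and $SFPOTP\Rightarrow FPOTP$.

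For $WPOTP\Rightarrow FPOTP$ and $SPOTP\Rightarrow SFPOTP$, given a finite $(\delta,T)$-chain $\{x_i;t_i\}_0^k$ of $H$ I would complete it to a bi-infinite $(\delta,T)$-chain of $H$ by prepending the points $X_{-nT}(x_0)$ with waiting times $T$ for $n\ge1$, and appending $X_{t_k}(x_k)$ and then $X_{nT}\!\big(X_{t_k}(x_k)\big)$ with waiting times $T$ for $n\ge1$; all adjoined points lie in $H$ by invariance, all adjoined jumps are exact, and on the window $[0,\sum_{i=0}^k t_i]$ the $*$-orbit is unchanged. Applying $WPOTP$ (resp. $SPOTP$) to this bi-infinite chain yields a tracing point and a reparametrization; restricting the reparametrization to the window and re-extending it off the window by the two slope-one rays $t\mapsto t$ and $t\mapsto g(\sum t_i)+(t-\sum t_i)$ produces a reparametrization lying in $Rep$ in all cases and in $Rep(\epsilon)$ whenever its restriction did — the latter being an elementary case check on the difference quotient according to the position of $s$ and $t$ relative to the window. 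This gives $WPOTP\Rightarrow FPOTP$ and $SPOTP\Rightarrow SFPOTP$.

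For $SFPOTP\Rightarrow SPOTP$, fix $\epsilon<1$ and let $\delta,T$ be the constants from $SFPOTP$; let $\{x_i;t_i\}_{\mathbb{Z}}$ be a $(\delta,T)$-chain of $H$ with partial sums $S_i$ as in Definition \ref{definC8.4}. For each $n\ge1$ the truncation $C_n$ to the indices $-n,\dots,n$ is a finite $(\delta,T)$-chain, so by $SFPOTP$ it is strongly $\epsilon$-traced by some $z_n\in H$ via some $g_n\in Rep(\epsilon)$. Shifting the time parameter by $S_{-n}$ to align $C_n$ with $\{x_i;t_i\}_{\mathbb{Z}}$, this rewrites as $d\big(x_0*s,X_{\hat g_n(s)}(w_n)\big)\le\epsilon$ for all $s$ in an interval $I_n:=[S_{-n},S_{n+1}]$, where $w_n:=X_{g_n(-S_{-n})}(z_n)\in H$ and $\hat g_n(s):=g_n(s-S_{-n})-g_n(-S_{-n})$; one checks that $\hat g_n\in Rep(\epsilon)$ and that the $I_n$ increase to $\mathbb{R}$ since each $t_i\ge T>0$.

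Now I would pass to the limit. Since $H$ is compact, a subsequence $w_{n_j}$ converges to some $w\in H$; since every $\hat g_n$ is $(1+\epsilon)$-Lipschitz and fixes $0$, an Arzel\`a--Ascoli/diagonal argument over the intervals $[-m,m]$, $m\in\mathbb{N}$, gives a further subsequence converging uniformly on compacta to some $g$ that inherits the bound $1-\epsilon\le\frac{g(s)-g(t)}{s-t}\le1+\epsilon$, hence $g\in Rep(\epsilon)$ (using $\epsilon<1$ for strict monotonicity and surjectivity). Fixing $s\in\mathbb{R}$, for all large $j$ one has $s\in I_{n_j}$, so the displayed estimate holds with index $n_j$; letting $j\to\infty$ and using continuity of the flow $(t,x)\mapsto X_t(x)$ we obtain $d(x_0*s,X_{g(s)}(w))\le\epsilon$ for every $s$, i.e. $w\in H$ strongly $\epsilon$-traces the chain, so $SPOTP$ holds. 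The one delicate step is precisely this one: arranging the parameter shifts $S_{-n}$ so that the base points $w_n$ and the reparametrizations $\hat g_n$ can be made to converge simultaneously; once that bookkeeping is in place, continuity of the flow closes the estimate, and all the remaining content of the theorem is either an inclusion of reparametrization classes or an explicit extension.
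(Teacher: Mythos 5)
Your argument is correct, but note that the paper itself does not prove Theorem \ref{teorC8.2} at all: it simply cites Theorem 7 of Komuro's paper \cite{ko}. What you have written is essentially a self-contained reconstruction of that classical argument, and the decomposition is the right one: the horizontal arrows and $SFPOTP\Rightarrow FPOTP$ are pure formalities (indeed $Rep(\epsilon)\subseteq Rep^*$ holds by the very definition in the paper, so your $\epsilon<1$ normalization is not needed there --- it is genuinely needed only later, to get strict monotonicity and surjectivity of the limit reparametrization); the two downward arrows come from completing a finite chain to a bi-infinite one by exact orbit segments with waiting time $T$, which is legitimate since $H$ is invariant, and your slope-one extension of the restricted reparametrization does preserve the $Rep(\epsilon)$ difference-quotient bound by the weighted-average observation; and the only substantial implication, $SFPOTP\Rightarrow SPOTP$, is exactly the truncation-plus-compactness argument, where your bookkeeping $w_n=X_{g_n(-S_{-n})}(z_n)$, $\hat g_n(s)=g_n(s-S_{-n})-g_n(-S_{-n})$ correctly realigns the traced truncations with the bi-infinite chain (and only uses forward time applied to points of the invariant set $H$, so the possible boundary of $M$ causes no trouble), after which Arzel\`a--Ascoli and continuity of the flow close the estimate with the non-strict bound $\leq\epsilon$, matching the definition. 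Two cosmetic caveats: the paper's Definition \ref{definC8.6} literally demands the tracing inequality ``for every $t\in\mathbb{R}$'' although $x_0*t$ is only defined on $[0,\sum_{i=0}^k t_i]$ for a finite chain, and you implicitly (and reasonably) read it on that window; and in the two downward implications the tracing point and reparametrization obtained for the extended bi-infinite chain already serve verbatim for the finite chain, so the restriction/re-extension step, while harmless, is not strictly necessary. What your writeup buys over the paper's treatment is an actual proof in the flow setting at hand rather than an appeal to \cite{ko}; what the citation buys the authors is brevity, since Komuro's Theorem 7 covers precisely these five properties and their relations.
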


\begin{proof}[Proof]
 See Theorem 7 in \cite{ko}
\end{proof}

In addition, we also have that:

\begin{teor} \label{teorC8.3}
Let $H$ a nonsingular compact invariant set of $X$, then following relations holds: 
\begin{center}
$ SPOTP \Leftrightarrow NPOTP \Leftrightarrow WPOTP \Leftrightarrow SFPOTP \Leftrightarrow FPOTP$ 
\end{center}
\end{teor}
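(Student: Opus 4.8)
The statement to prove is Theorem \ref{teorC8.3}: for a nonsingular compact invariant set $H$, the five notions of POTP are all equivalent. We already know from Theorem \ref{teorC8.2} that $\mathrm{SPOTP}\Leftrightarrow\mathrm{SFPOTP}$, $\mathrm{SPOTP}\Rightarrow\mathrm{NPOTP}\Rightarrow\mathrm{WPOTP}\Rightarrow\mathrm{FPOTP}$, and $\mathrm{NPOTP}\Rightarrow\mathrm{FPOTP}$, $\mathrm{SFPOTP}\Rightarrow\mathrm{FPOTP}$. So the whole equivalence collapses if we can close the loop, e.g. by showing $\mathrm{FPOTP}\Rightarrow\mathrm{SPOTP}$ (or $\mathrm{WPOTP}\Rightarrow\mathrm{SPOTP}$), which is where nonsingularity must be used.

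Plan of the proof. First I would recall/cite the general fact (Komuro \cite{ko}) that on the closed invariant set $H$ all these notions are equivalent once one can pass from a bi-infinite chain to finite sub-chains and back, and from weak reparametrizations to controlled ones; the only place this fails in general is near singularities, where the flow direction degenerates and a small spatial error can correspond to an unbounded time error. Since $H$ is compact, invariant, and contains no singularity, $\|X\|$ is bounded below by some $c>0$ and above by some $C>0$ on $H$ (and on a compact neighborhood of $H$). The key point is a \emph{tubular/flow-box} argument: there is $\rho>0$ such that for every $p\in H$ the flow is conjugate, on the ball $B(p,\rho)$, to a product flow, so that if two points are $\epsilon$-close (with $\epsilon$ small relative to $\rho$) then the holonomy along the flow lines identifies them with a time-reparametrization whose derivative is within $O(\epsilon/c)$ of $1$. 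This is exactly what upgrades a weak $\epsilon$-tracing reparametrization $g\in Rep$ to one in $Rep(\epsilon')$ with $\epsilon'\to 0$ as $\epsilon\to 0$, hence $\mathrm{WPOTP}\Rightarrow\mathrm{SPOTP}$.

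The steps, in order: (1) Fix $\epsilon>0$; choose $\epsilon_0\le\epsilon$ small enough that the uniform flow-box lemma on a neighborhood of $H$ applies at scale $\epsilon_0$ and gives reparametrization control with error $\le\epsilon$. (2) Apply $\mathrm{WPOTP}$ (which holds by hypothesis, being implied by $\mathrm{FPOTP}$ in the chain of Theorem \ref{teorC8.2} — wait, that arrow goes the other way, so instead: assume the weakest notion $\mathrm{FPOTP}$ and argue directly) — more precisely, assume $H$ has $\mathrm{FPOTP}$; given $\epsilon>0$ get $\delta,T>0$ from the definition. (3) Take an arbitrary bi-infinite $(\delta',T)$-chain with $\delta'$ suitably small; truncate it to finite sub-chains $\{x_i;t_i\}_{-n}^{n}$, each weakly $\epsilon_0$-traced by some $y_n\in H$. (4) Use compactness of $H$ to extract a convergent subsequence $y_n\to y\in H$ and a diagonal/limiting argument on the reparametrizations $g_n$ (using that, thanks to the flow-box control, the $g_n$ have uniformly bounded distortion, hence are equicontinuous with surjective limit) to obtain a single $g\in Rep^*$ with $d(x_0*t,X_{g(t)}(y))\le\epsilon_0$ for all $t$; this yields $\mathrm{NPOTP}$. (5) Upgrade the limit reparametrization to $Rep(\epsilon)$ by the holonomy estimate of step (1), giving $\mathrm{SPOTP}$, and invoke Theorem \ref{teorC8.2} to get $\mathrm{SFPOTP}$ and close all equivalences.

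The main obstacle I expect is step (4)–(5): controlling the reparametrizations in the limit. A priori a weak $\epsilon$-tracing reparametrization can be badly behaved (it only needs to be a homeomorphism onto an interval, not onto $\Dr$, and its derivative is uncontrolled), so the passage $\mathrm{WPOTP}\Rightarrow\mathrm{NPOTP}\Rightarrow\mathrm{SPOTP}$ is not formal — it is precisely here that one must exploit $\inf_H\|X\|>0$ to convert spatial $\epsilon$-closeness of two orbit segments into a quantitative time-matching with derivative near $1$, and also to guarantee surjectivity of the limiting reparametrization (the traced orbit cannot ``stall''). Handling the bookkeeping so that the errors from truncation, from the limiting procedure, and from the reparametrization upgrade all stay below the prescribed $\epsilon$ simultaneously is the delicate part; everything else is the standard compactness-plus-flow-box machinery, for which I would cite Theorem 7 in \cite{ko} for the parts that are not affected by singularities and supply the nonsingular refinement only where needed.
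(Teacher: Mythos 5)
First, for calibration: the paper does not prove this theorem at all --- its ``proof'' is the single line ``See Theorem 4 in \cite{ko}''. Your proposal, by contrast, tries to sketch the actual argument behind Komuro's theorem, and the outline points in the right direction: by Theorem \ref{teorC8.2} it suffices to close the loop $\mathrm{FPOTP}\Rightarrow\mathrm{SPOTP}$, and the place where nonsingularity enters is exactly the uniform bound $\inf_H\|X\|>0$ together with uniform flow boxes, which converts spatial $\epsilon$-closeness of orbit segments into quantitative time control.

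However, as a proof the proposal has a genuine gap at its decisive step. The whole argument hinges on the claim that a \emph{weak} tracing reparametrization $g\in Rep$ can be replaced by one in $Rep(\epsilon')$ (and, in the bi-infinite limit, a surjective one): you invoke this in step (1) as a ``holonomy estimate'', you silently use it again in step (4) when you assert that the $g_n$ ``have uniformly bounded distortion'' (weak tracing gives no control whatsoever on $g_n$, so this is only true after the upgrade has already been performed on each finite piece), and you use it once more in step (5). But this upgrade lemma --- that on a nonsingular compact set, weak $\epsilon$-tracing of a chain forces the existence of a controlled reparametrization, e.g.\ via uniformly transverse local cross-sections and bounded return times --- is precisely the mathematical content of Komuro's Theorem 4; it is named and its difficulty is acknowledged, but it is never proved, and the error bookkeeping (how $\epsilon'$ depends on $\epsilon$, $\inf_H\|X\|$, $\sup\|X\|$ and the flow-box size, and why the limit of the upgraded $g_n$ is onto $\mathbb{R}$ rather than onto a half-line) is not carried out. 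There is also a minor structural wobble mid-argument (the ``wait, that arrow goes the other way'' passage), which signals that the reduction should be stated cleanly as $\mathrm{FPOTP}\Rightarrow\mathrm{SPOTP}$ from the start. So the proposal is a reasonable road map to Komuro's proof, but it does not constitute a proof; if the intent is to defer the hard step to \cite{ko} anyway, then the honest version of your argument collapses to the paper's own citation.
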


\begin{proof}[Proof]
See Theorem 4 in \cite{ko}
\end{proof}

As a consequence, every hyperbolic attracting set has POTP in any its of five versions. Note that a singularity satisfies definitions trivially as compact invariant set.\\

As we mentioned, in \cite{ww} was proved that if $\Lambda$ is a sectional-hyperbolic chain transitive set with WPOTP, then it admits no singularity.\\

We observe that if an invariant compact set does not have FPOTP, then it does not have any of the shadowing definitions presented in this section. This, together with the following Lemma, will be the tools that we will use in the next section, to prove that some kind attractor sectional-hyperbolic sets doesn't have shadowing.\\

\begin{lemma}\label{lemaC8.1}
Let $H$ a compact invariant set of $X$. The following statements are equivalent:
\begin{enumerate}
\item $H$ has FPOTP (resp. SFPOTP),
\item for every $\epsilon>0$ and $T>0$ there is $\delta>0$ such that every finite $(\delta,T)$-chain is weakly $\epsilon$-traced (Resp. strong $\epsilon$-traced).
\end{enumerate}
\end{lemma}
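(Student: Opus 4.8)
The plan is to prove the two implications separately. The direction $(2)\Rightarrow(1)$ is immediate: fix any $T>0$ (say $T=1$), let $\delta$ be the constant given by $(2)$ for this $\epsilon$ and $T$, and then $(\delta,T)$ is exactly a witness for FPOTP (resp. SFPOTP) in the sense of Definition \ref{definC8.8}. So all the content is in $(1)\Rightarrow(2)$. Assume $H$ has FPOTP, so for every $\eta>0$ there are $\delta_\eta,T_\eta>0$ such that every finite $(\delta_\eta,T_\eta)$-chain of $H$ is weakly $\eta$-traced. Fix $\epsilon>0$ and an \emph{arbitrary} $T>0$; we must produce $\delta>0$ for which every finite $(\delta,T)$-chain is weakly $\epsilon$-traced. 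First apply FPOTP with $\epsilon/2$ to obtain $\delta_0:=\delta_{\epsilon/2}$ and $T_0:=T_{\epsilon/2}$.

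Given a finite $(\delta,T)$-chain $\{x_i;t_i\}_{i=0}^k$ (with $\delta$ to be chosen below), I would first \emph{subdivide}: whenever $t_i>2T$, replace the pair $(x_i,t_i)$ by equal orbit subsegments of time lying in $[T,2T]$. Since these are genuine pieces of the orbit of $x_i$, the result is again a finite $(\delta,T)$-chain $\{x'_i;t'_i\}$, now with all $t'_i\in[T,2T]$, having exactly the same associated pseudo-orbit $x'_0*t=x_0*t$ and the same total time $\sum t'_i=\sum t_i$. Then I would \emph{coarsen}: group the subdivided chain greedily into consecutive blocks, closing a block as soon as its accumulated time first reaches $T_0$. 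Because each $t'_i\le 2T$, every block has total time $\sigma_j\in[T_0,\,T_0+2T)$ and hence at most $N:=\lceil (T_0+2T)/T\rceil$ terms. Let $\{y_j;s_j\}$ be the coarsened chain, with $y_j$ the first point of block $j$ and $s_j=\sigma_j\ge T_0$.

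The core estimate is to compare, inside each block, the coarsened pseudo-orbit (which flows from $y_j$ continuously for time $\sigma_j$) with the original one (which resets to the next $x'_i$ at every partial sum). Writing $L:=T_0+2T$ and letting $\omega_L$ be a modulus of uniform continuity for the flow maps $X_t$ with $|t|\le L$ (available since $M$ is compact), an induction over the at most $N$ terms of a block shows that after $l$ terms the accumulated displacement is at most $\Phi_N(\delta)$, where $\Phi_0(\delta)=0$ and $\Phi_{m+1}(\delta)=\omega_L(\Phi_m(\delta))+\delta$; in particular the coarsened chain has jump errors $\le\Phi_N(\delta)$, and at every intermediate time the two pseudo-orbits differ by at most $\omega_L(\Phi_N(\delta))$. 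Since $\Phi_N$ is a finite composition of continuous functions vanishing at $0$, I would now fix $\delta=\delta(\epsilon,T)>0$ small enough that $\Phi_N(\delta)\le\delta_0$ and $\omega_L(\Phi_N(\delta))\le\epsilon/2$. Then $\{y_j;s_j\}$ is a finite $(\delta_0,T_0)$-chain, so by FPOTP it is weakly $(\epsilon/2)$-traced by some point $x$ with some $g\in Rep$; combining this with the $\epsilon/2$ bound between the two pseudo-orbits (both defined on $[0,\sum t_i]$, since subdivision and grouping preserve total time) gives $d(x_0*t,X_{g(t)}(x))\le\epsilon$ on that interval, i.e.\ the original $(\delta,T)$-chain is weakly $\epsilon$-traced. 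The SFPOTP case is identical, using SFPOTP with $\epsilon/2$ and the inclusion $Rep(\epsilon/2)\subseteq Rep(\epsilon)$ so that the reparametrization obtained remains admissible.

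The main obstacle is precisely this error-propagation estimate: merging short jumps into blocks of total time $\ge T_0$ is trivial, but controlling how the ``resets'' amplify inside a block requires the block's total time — and hence its number of jumps — to be uniformly bounded, which is exactly why a long jump must first be cut into pieces of size comparable to $T$ before the grouping is performed. Once the blocks have uniformly bounded length, ordinary uniform continuity of the flow on the compact interval $[-L,L]$ handles the rest.
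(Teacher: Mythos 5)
The paper does not actually prove Lemma \ref{lemaC8.1}; it simply quotes Lemma 3.2 of Komuro \cite{ko}, so there is no proof in the text to compare yours with line by line. Your blind argument is, in substance, a correct self-contained proof of the standard statement behind that citation. The direction $(2)\Rightarrow(1)$ is indeed immediate from Definition \ref{definC8.8}, and for $(1)\Rightarrow(2)$ your two-step manipulation is exactly what makes the argument work: first cutting long orbit pieces so that all times lie in $[T,2T]$ (which changes neither the pseudo-orbit nor the $(\delta,T)$-chain property, since the inserted jumps have error $0$), and then greedily grouping into blocks of time at least $T_0$ bounds the number of resets per block by $N$, so that the iterated modulus $\Phi_N$ built from uniform continuity of the flow maps $X_t$, $|t|\le L$, on the compact manifold $M$ yields a $\delta$ depending only on $\epsilon$, $T$, $T_0$, $\delta_0$. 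Your final observations, that the reparametrization produced for the coarsened chain also traces the original chain because the two pseudo-orbits are uniformly $\epsilon/2$-close on the same time interval, and that $Rep(\epsilon/2)\subseteq Rep(\epsilon)$ settles the SFPOTP case, are also correct.

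One boundary case is not covered as written: the greedy grouping can leave a last block whose accumulated time is strictly less than $T_0$ (in particular, the whole chain may have total time less than $T_0$), and such a block is not admissible in a $(\delta_0,T_0)$-chain. The repair is routine but should be stated: merge the leftover terms into the preceding complete block, replacing your bounds $L=T_0+2T$ and $N$ by $L=2T_0+2T$ and the corresponding number of terms; and if the entire chain has total time below $T_0$, enlarge the final time $t_k$ until the (single-block) total reaches $T_0$, which is legitimate because the chain condition imposes no constraint after the last jump and weak (or strong) tracing of the extended pseudo-orbit restricts to tracing of the original one on its time interval. With this adjustment your proof is complete, and it is a useful addition precisely because the paper itself gives no argument.
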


\begin{proof}[Proof]
See Lemma 3.2 in \cite{ko}
\end{proof}

As the main result of this paper, we prove the following Theorem:

\begin{teor}
If $\Lambda$ is a codimension one sectional-hyperbolic attractor set with a unique singularity Lorenz-like, which is of boundary-type, then $\Lambda$ does not have FPOTP.
\end{teor}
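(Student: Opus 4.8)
The plan is to argue by contradiction, essentially generalizing Komuro's argument for the geometric Lorenz attractor to this broader class. Suppose $\Lambda$ has FPOTP. The key structural input is Corollary~\ref{coroC7.5}: every point of $\Lambda$ can be approximated by points whose omega-limit set is the (unique) Lorenz-like singularity $\sigma$. First I would fix a periodic point $p \in \Lambda$ with large period (such points are dense since $\Lambda$ is an attractor of a sectional-Anosov flow, hence has dense periodic orbits) and fix a small $\epsilon > 0$, smaller than, say, one third of the diameter of $\mathcal{O}(p)$ and small enough that the $\epsilon$-ball around $\sigma$ misses most of $\mathcal{O}(p)$. Applying FPOTP (in the form of Lemma~\ref{lemaC8.1}) gives $\delta, T > 0$ controlling the tracing.

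Next I would build a bad finite $(\delta,T)$-chain. Starting from a point $q$ near $p$ whose forward orbit $\omega_X(q) = \{\sigma\}$ (Corollary~\ref{coroC7.5}), I follow the true orbit of $q$ for a long time, recording points $x_0 = q, x_1, \dots, x_m$ with consecutive jumps $t_i \geq T$ and zero error: by the time we reach $x_m$, the orbit segment is within $\delta$ of $\sigma$ (and in fact spends a long stretch near $\sigma$). Because $\sigma$ is a singularity and $\delta$-chains allow a $\delta$-jump, I can then splice: from a point $\delta$-close to $\sigma$ I jump back to a point near $p$ and continue along $\mathcal{O}(p)$, then jump back toward $\sigma$ again, repeating. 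This produces a finite $(\delta,T)$-chain that first shadows $\mathcal{O}(p)$, then loiters near $\sigma$, then returns to $\mathcal{O}(p)$. The point is that any genuine orbit of $X_t$ that $\epsilon$-traces this chain must, during the "loiter near $\sigma$" phase, stay within $\epsilon$ of $\sigma$ for a controlled amount of $g$-reparametrized time; but a true orbit that gets $\epsilon$-close to a hyperbolic saddle singularity and then must later be $\epsilon$-close to $\mathcal{O}(p)$ (away from $\sigma$) is forced to travel along the unstable manifold $W^{uu}(\sigma)$. Here the codimension-one, boundary-type, unique-Lorenz-like-singularity hypotheses enter decisively: the geometry of $\F^{ss}(\sigma)$ dividing $W^{ss}(\sigma)$, together with $\F^{ss}(\sigma)\cap\Lambda = \{\sigma\}$ (Lemma~\ref{lema1}) and the fact that $\Lambda$ meets only one of the two stable regular orbits of $\sigma$, pins down which side the tracing orbit must exit on, leading to an obstruction.

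The contradiction I would extract is a time/length mismatch of Komuro type. On one hand, the chain can be made to spend arbitrarily long near $\sigma$ while still being a legitimate finite $(\delta,T)$-chain (just follow $q$'s true orbit longer before splicing). On the other hand, a true orbit of $X_t$ that $\epsilon$-traces it must traverse, in finite real time, a neighborhood of $\sigma$ along which the flow is arbitrarily slow (the flow box near a singularity has passage time tending to infinity as the orbit approaches $W^{ss}(\sigma)$), yet the reparametrization $g \in Rep$ must absorb all of this while keeping $d(x_0 * t, X_{g(t)}(x)) \le \epsilon$ for all $t$ and keeping the subsequent return to $\mathcal{O}(p)$ on schedule. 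Quantifying this — showing the tracing orbit is squeezed onto $W^{uu}(\sigma)$ on the wrong side and cannot reconnect with $\mathcal{O}(p)$ within $\epsilon$ — yields the contradiction, so $\Lambda$ cannot have FPOTP, and by the implications in Theorem~\ref{teorC8.2} it has none of the shadowing variants.

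The main obstacle, I expect, is making the "forced onto the unstable manifold on the wrong side" step rigorous: one has to control how a $\epsilon$-tracing orbit behaves in the singular flow box associated to $\sigma$, use the singular cross sections $\Sigma_t, \Sigma_b$ and the leaf $l^*_\Sigma$ to track which component of $W^{ss}(\sigma)\setminus \F^{ss}(\sigma)$ the chain and the tracing orbit lie in, and then invoke that $\Lambda$ (being of boundary type with a unique Lorenz-like singularity) only returns along one of the two branches — so that the spliced return to $\mathcal{O}(p)$ is incompatible with where $W^{uu}(\sigma)$ delivers the tracing orbit. The nonsingular parts of the argument (density of periodic orbits, hyperbolicity of $\mathcal{O}(p)$ via the hyperbolic lemma, expansiveness-type control away from $\sigma$) are routine; the singular splicing estimate is the crux.
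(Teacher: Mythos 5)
Your overall plan (contradiction via a spliced finite $(\delta,T)$-chain exploiting the passage near the Lorenz-like singularity) is the right spirit, but the two steps on which your contradiction rests do not work as stated. First, the mechanism you propose is temporal: the chain ``loiters'' near $\sigma$ while the tracing orbit must keep the return to $\mathcal{O}(p)$ ``on schedule''. But FPOTP only asks for \emph{weak} $\epsilon$-tracing, with $g\in Rep$ an arbitrary continuous increasing reparametrization (not surjective, with no Lipschitz-type control); there is no schedule to keep, and arbitrarily slow passage near $\sigma$ is absorbed by $g$ at no cost. No time/length mismatch can obstruct weak tracing; the obstruction has to be purely geometric. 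Second, the chain you describe (follow $q$ with $\omega(q)=\{\sigma\}$ until $\delta$-close to $\sigma$, jump back near $p$, repeat) is not obviously untraceable: since $\Lambda$ is a transitive attractor containing $W^{uu}(\sigma)$, genuine orbits pass near $\sigma$ and then return near $\mathcal{O}(p)$ all the time, so ``reconnecting with $\mathcal{O}(p)$'' is not forbidden. What must be forbidden is reaching the chain's \emph{endpoint}, which has to be placed on a prescribed branch of $W^{uu}(\sigma)$ (a point $y^l_\sigma$ or $y^r_\sigma$ as in \eqref{ecuaC8.1}) that the tracing orbit provably cannot approach; and to prove that, you must pin the tracing point $z\in\Lambda$ to one side of the local stable leaf of the chain's starting point. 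That pinning is exactly what the paper's notion of \emph{side point} supplies ($\Lambda$ misses one component of $B_\epsilon(x)\setminus\F^s_\epsilon(x)$), and side points exist here precisely because $\sigma$ is of boundary type (for small $\gamma$ the branches $W^l_\gamma$, $W^r_\gamma$ of $W^{uu}(\sigma)$ consist of side points). Your sketch never establishes that the chain starts at a side point, nor which component $z$ must lie in, nor why the tracing orbit then exits along the branch opposite to the chain's target --- which is the crux you yourself flag as the main obstacle.

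There is also a structural gap relative to what is actually needed: a single splice does not suffice in this generality. The paper first proves, by Komuro-type chain constructions, that under FPOTP the set $Sd(\Lambda)$ is positively invariant and that strong stable leaves through side points are disjoint from those through bi-side points (Theorems \ref{teorC8.4} and \ref{teorC8.6}, using Corollary \ref{coroC7.5} to place basin-of-$\sigma$ points on both sides of a bi-side point); the Main Theorem then derives the contradiction by showing that the forward orbit of a side point must land on the strong stable leaf of a bi-side point, splitting into the cases $\sigma\in\omega(q)$ (via the connecting lemma of \cite{bss}, a periodic orbit from \cite{aml}, and the boundary-type cross section of \cite{bs} whose every leaf carries a bi-side point) and $\sigma\notin\omega(q)$ (via the hyperbolic lemma, Lemma \ref{lemaC8.2}, and a section built along $W^{uu}(s)$). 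Your proposal engages none of this, and in addition your assertion that periodic orbits are dense in $\Lambda$ is unsupported (and unnecessary: only the existence of one periodic orbit is used). As written, the proposal is an outline of Komuro's three-dimensional argument with the decisive side/branch bookkeeping left open, so it does not yet constitute a proof.
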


\begin{coro}
If $\Lambda$ is a codimension one sectional-hyperbolic attractor set with a unique singularity Lorenz-like, which is of boundary-type, then $\Lambda$ does not have shadowing.
\end{coro}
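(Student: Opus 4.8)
The strategy is to argue by contradiction, following the spirit of Komuro's treatment of the geometric Lorenz attractor but exploiting the structural hypotheses available here: codimension one, a unique Lorenz-like singularity $\sigma$, and the boundary-type condition. Suppose $\Lambda$ has FPOTP. The key geometric input is Corollary~\ref{coroC7.5}: every point of $\Lambda$ can be approximated by points $p\in\Lambda$ with $\omega_X(p)=\{\sigma\}$. Pick such a $p$ and a point $q\in\Lambda$ far from $\sigma$ (for instance a point on a periodic orbit, or any regular point whose forward orbit returns near a fixed singular cross section $\Sigma$ associated to $\sigma$). The idea is to build, for every small $\delta$, a finite $(\delta,T)$-chain that starts near $q$, then follows the true orbit of $p$ into a tiny neighborhood of $\sigma$, makes one $\delta$-jump across the strong stable foliation $\F^{ss}(\sigma)$ — which is legitimate precisely because $\F^{ss}(\sigma)\cap\Lambda=\{\sigma\}$ by Lemma~\ref{lema1}, so both sides of $\sigma$ are "reachable" within $\Lambda$ — and then continues along a true orbit that leaves $\sigma$ through the \emph{other} branch of the unstable manifold and comes back near $q$. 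Concatenating finitely many such blocks produces a chain whose associated pseudo-orbit spends long stretches close to $\sigma$ on one side and then long stretches close to $\sigma$ on the opposite side.

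Next I would show no single true orbit can $\epsilon$-trace such a chain once $\epsilon$ is chosen small enough (smaller than, say, a third of the distance between the two local unstable branches of $\sigma$, or between the two components into which $\F^{ss}(\sigma)$ divides $W^{ss}(\sigma)$). The point is that a tracing orbit must stay $\epsilon$-close to the pseudo-orbit; when the pseudo-orbit sits in the tiny neighborhood of $\sigma$ for a long time interval, the tracing orbit is forced to linger near $\sigma$ as well, hence essentially to pass through $\F^s(\sigma)$ or $W^s(\sigma)$; but then its continuation is determined and it must exit along a definite branch, whereas the pseudo-orbit, by construction, exits along the opposite branch at the next block. Since the boundary-type hypothesis guarantees that $\Lambda$ meets only one of the two stable regular orbits associated to $\lambda_\sigma$ (equivalently, $\Lambda$ sits locally on one side), the reparametrized time in $Rep$ cannot absorb this discrepancy: weak tracing still requires spatial $\epsilon$-closeness for \emph{every} $t$, and the forced branch-switching in the pseudo-orbit cannot be matched. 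This yields the contradiction with FPOTP via Lemma~\ref{lemaC8.1} (it suffices to violate tracing for one $\epsilon$ and all $\delta$, with $T$ fixed), and then the Corollary follows immediately from the remark that failure of FPOTP implies failure of every shadowing variant in Definitions~\ref{definC8.7}--\ref{definC8.8}, by Theorem~\ref{teorC8.2}.

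The main obstacle I anticipate is making the "forced branch-switching" rigorous: one must quantify how long a pseudo-orbit can be made to stay near $\sigma$ (controlled by how close to $\sigma$ the approximating point $p$ from Corollary~\ref{coroC7.5} starts, together with the expansion rate $\lambda_\sigma$ and the eigenvalue gap in the Lorenz-like condition) and simultaneously control the "escape window" of any tracing orbit, so that the two requirements are genuinely incompatible for all $\delta$ with $\epsilon$ and $T$ held fixed. Concretely, the technical core is a local analysis near $\sigma$ in a singular cross section $\Sigma_t\cup\Sigma_b$: one shows that a $\delta$-jump can move the pseudo-orbit from the $l^*_{\Sigma}$-leaf on one side to the other, while any $\epsilon$-tracing orbit is confined by the strong stable foliation $\F^{ss}$ (which foliates the neighborhood $U_\Lambda$) to the component it entered, and the Lorenz-like spectral condition forbids it from crossing $\F^{ss}(\sigma)$ without first hitting $\sigma$. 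Packaging this estimate cleanly — and ruling out that a clever reparametrization or a long "rest" of the tracing orbit near $\sigma$ evades the argument — is where the real work lies; the global concatenation and the logical reduction to Lemma~\ref{lemaC8.1} are then routine.
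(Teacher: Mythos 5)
Your closing reduction---failure of FPOTP implies failure of every other shadowing notion via Theorem~\ref{teorC8.2} (the paper's remark that a set without FPOTP has none of the five properties)---is exactly how the paper obtains this corollary from its Main Theorem, so that part is correct; simply citing the Main Theorem, stated immediately before the corollary, would have finished the proof. The difficulty is in the portion where you instead re-derive the failure of FPOTP, and there the central claim is not established: you assert that an $\epsilon$-tracing orbit, having lingered near $\sigma$, ``must exit along a definite branch, whereas the pseudo-orbit exits along the opposite branch,'' but nothing in your construction controls \emph{which} branch the tracing orbit takes. The tracing point $z$ only needs to be $\epsilon$-close to the chain at each time; it may perfectly well enter the neighborhood of $\sigma$ on the side of the local stable leaf (or of $\F^{s}(\sigma)$) whose orbits leave along the very branch your chain jumps to, and then there is no contradiction at all. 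This is precisely the obstruction the paper's side-point machinery is designed to remove: the boundary-type and attractor hypotheses produce side points (the points of $W^l_\gamma$ and $W^r_\gamma$, and via Corollary~\ref{coroC7.5} a cross section whose leaves carry bi-side points); a chain started at a side point $p$ forces the tracing point into the unique component $B^+_\epsilon(p)$ of $B_\epsilon(p)\setminus\F^s_\epsilon(p)$ met by $\Lambda$; injectivity of the flow together with the $\F^{ss}$-foliation propagates this one-sidedness down to the singular cross section, so the tracing orbit lands in $X_{[-T_\R,T_\R]}(V^r_\epsilon(l^*_\R))$ and is forced by (\ref{ecuaC8.3}) to visit $B_{\beta_\sigma}(y^r_\sigma)$, while the chain is deliberately sent to $y^l_\sigma$ on the other branch. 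That is the content of Theorems~\ref{teorC8.4} and~\ref{teorC8.6} and of the Main Theorem's proof; your plan contains no substitute for this mechanism, and you yourself flag it as ``where the real work lies,'' so as written it is a program rather than a proof.

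Two further points. First, your choice of $\epsilon$ ``smaller than a third of the distance between the two local unstable branches of $\sigma$'' is vacuous: both branches emanate from $\sigma$, so that distance is zero; the paper instead fixes reference points $y^l_\sigma$, $y^r_\sigma$ on the two branches and radii $\beta_\sigma$ with $B_{\beta_\sigma}(y^\star_\sigma)$ disjoint from $\R$ and from $\mathcal{O}^+(l^*_\R)$ as in (\ref{ecuaC8.1}), and it is this separation that makes the final distance estimate contradictory. Second, the concatenation of ``finitely many blocks'' adds nothing: by Lemma~\ref{lemaC8.1} a single short chain (two or three jumps, as in the paper) suffices, provided the side-of-the-leaf control above is in place.
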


\section{Side Points}

In the case of the sectional-hyperbolic set of codimension $1$, over any regular point in it, the local stable manifold divides the neighborhoods of the point, in two connected components, we will use this, to we define a side point and study the shadowing property (specifically FPOTP) in sectional-hyperbolic sets.\\

Given $x \in \Lambda$ a sectional-hyperbolic set, we denote $\F^s_\epsilon(x)$ the connected component of $B_\epsilon(x) \cap \F^{s}(x)$ that contains $x$, where $B_\epsilon(x)$ denote the ball of the radius $\epsilon$ centered in $x$.

\begin{defin}\label{definC8.9}
Let $\Lambda$ a sectional hyperbolic set of codimension $1$, 
we say that $x\in \Lambda^*$ is a {\em side point}, if there exists $\epsilon >0$ 
such that $\Lambda$ intersects only one of the connected components of $B_\epsilon(x)\setminus \F^s_\epsilon(x)$, and  
we say that $y \in \Lambda^*$ is a {\em bi-side point}, if for all $\epsilon >0$, 
$\Lambda$ intersects booth connected components of $B_\epsilon(x)\setminus \F^s_\epsilon(x)$. 
\end{defin}

We can observe that there is a case that we did not mention in the previous definition, when $\Lambda$ does not intersect
any of the connected components of $B_\epsilon(x)\setminus\F^s_\epsilon(x)$;
although it is not so common, it can happen. The figure 1 in \cite{bm4} shows a sectional-hyperbolic set, composed by two singularities, one Lorenz-like and other not Lorenz-like, and a heteroclinical orbit between them; where, the regular points are neither side or bi-side. This example is interesting, due to is also an example of a sectional-hyperbolic set has FPOTP.\\

Now, we denote:
$$Sd(\Lambda)=\{x\in \Lambda : x \text{ is a side point}\}$$
and
$$\text{2-}Sd(\Lambda)=\{x\in \Lambda : x \text{ is a bi-side point}\}$$

\begin{lemma}\label{lemaC8.2}
Let $\Lambda$ a sectional hyperbolic Lyapunov stable set  of codimension $1$, then every hyperbolic point, is in 2-$Sd(\Lambda)$ 
\end{lemma}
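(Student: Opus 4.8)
The plan is to produce, through every hyperbolic point $x$, a nontrivial arc of $\Lambda$ that crosses the local stable leaf $\F^s_\epsilon(x)$ transversally; this arc is exactly (a piece of) the strong unstable manifold $W^{uu}(x)$, and the two half-arcs issuing from $x$ land in the two distinct local components cut out by $\F^s_\epsilon(x)$, forcing $x$ to be a bi-side point. So the proof has two independent ingredients: $W^{uu}(x)$ is a curve meeting $\F^s(x)$ transversally at $x$, and $W^{uu}(x)\subseteq\Lambda$.

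For the first ingredient, let $x\in\Lambda^*$ be a hyperbolic point and consider the compact invariant hyperbolic set $H=\overline{\Or(x)}=\omega_X(x)\cup\Or(x)\cup\alpha_X(x)$. Since $x$ is regular and $\Lambda$ is a codimension-one sectional-hyperbolic set, on the hyperbolic set $H$ the hyperbolic splitting refines the ambient sectional one, so $E^u_x$ is a line sitting inside $\Df^c_x$ complementary to $\langle X(x)\rangle$, i.e.\ $\dim E^u_x=\dim\Df^c_x-1=1$. Hence, by the invariant manifold theory \cite{hps}, $W^{uu}(x)$ is a one-dimensional $C^1$ manifold tangent at $x$ to $E^u_x$, and $W^{uu}_{\mathrm{loc}}(x)$ is a nontrivial embedded arc through $x$. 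Moreover $T_x\F^s(x)=\Df^s_x\oplus\langle X(x)\rangle=E^s_x\oplus E^X_x$, which together with $E^u_x$ spans $T_xM$; so $W^{uu}(x)$ is transverse to $\F^s(x)$ at $x$.

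For the second ingredient I would use Lyapunov stability. Take $q\in W^{uu}(x)$ and an arbitrary neighborhood $U$ of $\Lambda$; choose a neighborhood $W$ of $\Lambda$ with $X_t(W)\subseteq U$ for all $t\ge 0$, and $\eta>0$ with the $\eta$-neighborhood of $\Lambda$ contained in $W$. Since $d(X_{-t}(q),X_{-t}(x))\to 0$ as $t\to\infty$ and $X_{-t}(x)\in\Lambda$, there is $t_0>0$ with $X_{-t_0}(q)\in W$, hence $q=X_{t_0}(X_{-t_0}(q))\in U$. As $U$ is arbitrary and $\Lambda$ is closed, $q\in\Lambda$; thus $W^{uu}(x)\subseteq\Lambda$. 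This is the step where the Lyapunov stability hypothesis genuinely enters, and I expect it to be the main point of the argument — without it a strong unstable manifold need not return to the invariant set. To finish, fix $\epsilon>0$: by transversality (for small $\epsilon$, and then for every $\epsilon$ using the nesting $\F^s_{\epsilon'}(x)\subseteq\F^s_\epsilon(x)$ and a co-orientation of $\F^s(x)$ near $x$), $W^{uu}_{\mathrm{loc}}(x)$ meets $\F^s_\epsilon(x)$ only at $x$, and its two half-arcs lie, near $x$, in the two distinct connected components of $B_\epsilon(x)\setminus\F^s_\epsilon(x)$. Since these half-arcs are contained in $W^{uu}(x)\subseteq\Lambda$, both components meet $\Lambda$, so $x$ is a bi-side point, i.e.\ $x\in 2\text{-}Sd(\Lambda)$.
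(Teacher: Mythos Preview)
Your proof is correct and follows exactly the same approach as the paper's: the two branches of the one-dimensional strong unstable manifold $W^{uu}(x)$ lie in distinct components of $B_\epsilon(x)\setminus\F^s_\epsilon(x)$, and Lyapunov stability forces $W^{uu}(x)\subseteq\Lambda$. The paper states this in two lines, while you have carefully justified the dimension count, the transversality, and the Lyapunov-stability step, but the argument is the same.
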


\begin{proof}[Proof]
It is sufficient observing that $W^{uu}(p)$ is composed of two unstable branches, each of them, in a different component of the
$B_\epsilon(x)\setminus\F^s_\epsilon$, and since $\Lambda$ is Lyapunov stable set contains these branches. 
\end{proof}

Note that the continuous dependence of flow, guaranteed that, 2-$Sd(\Lambda)$ is positively invariant, but we can't claim the same 
for $Sd(\Lambda)$ in general, we prove that
is true in for some attractor sets with FPOTP, for this we introduce the following definitions and denotations.\\

If $x \in Sd(\Lambda)$ we denote by $B^+_{\epsilon}(x)$ the connected component of $B_{\epsilon_x}(x)\setminus \F^s_{\epsilon_x}(x)$ that intersect $\Lambda$ and 
$B^-_{\epsilon}(x)$ the other component.\\

Given $\sigma$ a Lorenz-like singularity in $\Lambda$ a sectional-hyperbolic set of codimension $1$ and $\R'_\sigma =\{\Sigma_t,\Sigma_b\}$ a singular cross section associated with $\sigma$, of the time $T_\R$.
We have that $\sigma$ divides 
to $W^{uu}(\sigma)$ in two connected components that we call $W^l$ and $W^r$. Something similar happens with $l^*_{\Sigma_b}$ that divides $\Sigma_b$, and  $l^*_{\Sigma_t}$
that divides $\Sigma_t$; we denote these connected components with $\{l,r\}$ of corresponding way, that is, $\Sigma_b^l$ is the component of $\Sigma_b$ that when leaving 
$\Sigma_b$ begins to accumulate $W^l$, and $\Sigma_b^r$ is the component of $\Sigma_b$ that when leaving 
$\Sigma_b$ begins to accumulate $W^r$  \cite{bm3},\cite{amlb}. In the same a way, we denote $\Sigma_t^l$ and $\Sigma_t^r$.\\

We denote $\R_\sigma^l=\Sigma_b^l \cup \Sigma_t^l$ and $\R^r_\sigma=\Sigma_b^r \cup \Sigma_t^r$. Given $\delta>0$ we denote 
$V_\delta^l(l^*_{\R_\sigma}) = V_\delta(l^*_{\R_\sigma}) \cap \R^l_\sigma$ and 
$V_\delta^r(l^*_{\R_\sigma}) = V_\delta(l^*_{\R_\sigma}) \cap \R^r_\sigma$, where
$V_\delta(l^*_{\R_\sigma})$ is a subset of $R_\sigma$ such that $x \in V_\delta(l^*_{\R_\sigma})$ if only if there is $y \in l^*_{\R_\sigma}$ with $d(x,y)<\delta$ and the leaf of $\R_\sigma$ that contains $x$ is contained in $V_\delta(l^*_{\R_\sigma})$.\\

Let $\gamma>0$ we define:\\ 
$$W^l_\gamma=\{x\in W^l : \mathcal{O}^-(x) \subseteq B_\gamma(\sigma)\} \, \text{ and } \, W^r_\gamma=\{x\in W^r : \mathcal{O}^-(x) \subseteq B_\gamma(\sigma)\}$$

Since $\sigma$ is hyperbolic, by Hartman-Grobman Theorem, 
there exists $\beta_\sigma >0$ 
small enough such that $W^r_{2\beta_\sigma}$ 
and $W^l_{2\beta_\sigma}$ are open curves $(\sigma, y^r_\sigma)$
and $(\sigma, y^l_\sigma)$ respectively of size $2\beta_\sigma$, with $y^r_\sigma \in W^r$ and $y^l_\sigma \in W^l$, satisfying:

\begin{align}\label{ecuaC8.1}
\hspace{1cm} B_{\beta_\sigma}(y^\star_\sigma)\cap \mathcal{O}^+(l^*_\R)=\emptyset \,\,
\text{ and } \,\, B_{\beta_\sigma}(y^\star_\sigma)\cap \R =\emptyset.
\end{align}
whit $\star \in \{l,r\}$. Now, using the continuity of the flow, 
we can choose $\epsilon_\sigma>0$ and $T_\sigma$ such that:

\begin{equation}\label{ecuaC8.2}
 X_{(0,{T_\sigma})}(x)\cap B_{\beta_\sigma}(y^l_\sigma) \neq \emptyset \, \text{ and } \,
X_{(0,{T_\sigma})}(x)\cap B_{\beta_\sigma}(y^r_\sigma) = \emptyset \, 
\end{equation}

for all $x\in  X_{[-T_\R,T_\R]}(V^l_{\epsilon_\sigma}(l^*_\R))$; and

\begin{equation}\label{ecuaC8.3}
X_{(0,{T_\sigma})}(x)\cap B_{\beta_\sigma}(y^r_\sigma) \neq \emptyset \, \text{ and } \,
X_{(0,{T_\sigma})}(x)\cap B_{\beta_\sigma}(y^l_\sigma) = \emptyset \, 
\end{equation}

for all $x\in  X_{[-T_\R,T_\R]}(V^r_{\epsilon_\sigma}(l^*_\R))$.

\begin{teor}\label{teorC8.4}
Let $\Lambda$ a codimension one sectional hyperbolic Lyapunov stable set. If $p \in Sd(\Sigma))$ then $\omega(p)$ not is a closed orbit.
\end{teor}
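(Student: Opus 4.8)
The plan is a proof by contradiction. I assume $p\in Sd(\Lambda)$ and that $\omega(p)$ is a closed orbit $C$, and I show that $p$ would then have to be a bi-side point, or else a singularity, either of which is absurd: recall that $Sd(\Lambda)\subseteq\Lambda^*$ and that $Sd(\Lambda)\cap 2\text{-}Sd(\Lambda)=\emptyset$, since a single scale witnessing the ``one side'' property contradicts the ``both sides at every scale'' property. I split the argument according to whether $C$ is a periodic orbit or a singularity $\sigma$, and in the latter case according to whether $\sigma$ is Lorenz-like. Throughout I use the hyperbolic lemma, Lemma~\ref{lema1}, Lemma~\ref{lemaC8.2}, and the invariance of the stable foliation under the flow.

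Suppose first that $C$ is a periodic orbit. If $p\in C$, then $\omega(p)\cup\Or(p)\cup\alpha(p)=C$ is hyperbolic, so $p$ is a hyperbolic point and Lemma~\ref{lemaC8.2} gives $p\in 2\text{-}Sd(\Lambda)$, a contradiction. If $p\notin C$, then $\omega(p)=C$ forces $p\in W^s(C)=\bigcup_{q\in C}W^{ss}(q)$, so $p\in W^{ss}(q_0)$ for some $q_0\in C$; by the hyperbolic lemma $C$ is hyperbolic of saddle type, hence $C\subseteq 2\text{-}Sd(\Lambda)$. For $t$ large, $x_t:=X_t(p)$ can be made as close as we wish to its phase point $q_t\in C$, and $x_t\in W^{ss}(q_t)$, so $\F^s(x_t)=\F^s(q_t)$ is one and the same leaf; since $q_t\in 2\text{-}Sd(\Lambda)$, the set $\Lambda$ meets both components of $B_r(x_t)\setminus\F^s_r(x_t)$ for all small $r$. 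Now $X_{-t}$ is a diffeomorphism sending $\Lambda$ onto $\Lambda$ and carrying the flow-saturated stable foliation to itself, with $X_{-t}(x_t)=p$; choosing $r$ so small that $X_{-t}(B_r(x_t))\subseteq B_\epsilon(p)$, the image of this two-sided configuration exhibits points of $\Lambda$ on both components of $B_\epsilon(p)\setminus\F^s_\epsilon(p)$. As $\epsilon>0$ was arbitrary, $p\in 2\text{-}Sd(\Lambda)$, contradiction.

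Suppose now $C=\sigma$ is a singularity, so $p\in W^s(\sigma)=W^{ss}(\sigma)$, the stable manifold of $\sigma$ as a hyperbolic singularity. If $\sigma$ is not Lorenz-like, then $W^{ss}(\sigma)=\F^{ss}(\sigma)$, so $p$ and $\sigma$ lie on the same leaf of $\F^{ss}$, whence $\F^{ss}(p)=\F^{ss}(\sigma)$; but $\F^{ss}(\sigma)\cap\Lambda=\{\sigma\}$ by Lemma~\ref{lema1}, forcing $p=\sigma$, impossible since $p\in\Lambda^*$. If $\sigma$ is Lorenz-like, Lemma~\ref{lema1} gives $p\in W^{ss}(\sigma)\setminus\F^{ss}(\sigma)$. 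Since $y\in\F^{ss}(p)$ implies $d(X_t(y),X_t(p))\to 0$ and hence $X_t(y)\to\sigma$, we get $\F^{ss}(p)\subseteq W^{ss}(\sigma)$, a codimension-one sub-leaf; a dimension count then identifies its flow-saturation $\F^s(p)$, locally along $\Or^+(p)$, with the codimension-one manifold $W^{ss}(\sigma)$. Thus $\F^s_\epsilon(p)$ is a piece of $W^{ss}(\sigma)$, and its two local sides are precisely the two sides of $W^{ss}(\sigma)$ near $\sigma$ along which the unstable branches $W^l$ and $W^r$ lie; by Lyapunov stability, $W^l\cup W^r\subseteq\Lambda$. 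Following $\Or^+(p)$ until it crosses a singular cross section $\R_\sigma$ associated to $\sigma$ (necessarily along $l^*_{\R_\sigma}$, since $p\in W^s(\sigma)$) for the last time before converging to $\sigma$, and invoking the relations \eqref{ecuaC8.1}--\eqref{ecuaC8.3} — which funnel the $l$- and $r$-parts of $V_{\epsilon_\sigma}(l^*_{\R_\sigma})$ to the prescribed points $y^l_\sigma\in W^l$ and $y^r_\sigma\in W^r$ of $\Lambda$ — one deduces that $\Lambda$ accumulates that crossing point from both sides of its local stable leaf; transporting back along $X_{-t}$ as before gives $p\in 2\text{-}Sd(\Lambda)$, a contradiction. (If instead $\Lambda$ were locally contained in $\F^s(p)$, then $p$ would be neither a side nor a bi-side point, again contradicting $p\in Sd(\Lambda)$.)

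The routine points are the local topology — that for small $\epsilon$ the leaf $\F^s_\epsilon$ separates $B_\epsilon$ into exactly two components, that this separation is respected by the diffeomorphisms $X_{-t}$, and that it therefore suffices to argue at arbitrarily small scales — and the dimension count identifying $\F^s(p)$ with $W^{ss}(\sigma)$ locally. I expect the main obstacle to be the Lorenz-like singular subcase: one must rule out that $\Lambda$ lies on a single side of the stable regular orbit of $\sigma$, i.e.\ show that, although $W^l$ and $W^r$ are not themselves within $B_\epsilon$ of $\Or^+(p)$ at small scales, their membership in $\Lambda$ (from Lyapunov stability) together with the dynamics near the Lorenz-like singularity recorded in \eqref{ecuaC8.1}--\eqref{ecuaC8.3} forces points of $\Lambda$ onto both sides of the relevant local stable leaf. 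This is precisely the step for which the Lyapunov stability hypothesis and the singular cross-section construction are indispensable.
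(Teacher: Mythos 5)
Your strategy is to prove the contrapositive with no shadowing hypothesis at all: assume $\omega(p)$ is a closed orbit and force $p\in\text{2-}Sd(\Lambda)$ (or $p=\sigma$). That is genuinely different from the paper, whose proof is a contradiction argument that explicitly uses the assumption that $\Lambda$ has FPOTP (the hypothesis is invoked in the first lines of the proof via Lemma \ref{lemaC8.1}, even though it was dropped from the printed statement): one builds a finite $(\delta,T)$-chain $x_0=p$, then a point near $\sigma$, then a point of $W^l_{\beta_\sigma}$ flowing to $y^l_\sigma$, and shows that any tracing point $z$ must lie in $B^+_\epsilon(p)$, hence by (\ref{ecuaC8.2})--(\ref{ecuaC8.3}) is funneled toward $y^r_\sigma$ and can never be $\epsilon$-close to the chain near $y^l_\sigma$. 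Your non-Lorenz-like subcase (via Lemma \ref{lema1}) is clean, and your periodic-orbit subcase is plausible modulo the usual foliation-box bookkeeping; but these are not where the difficulty lies.

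The genuine gap is in the Lorenz-like subcase, at the sentence ``one deduces that $\Lambda$ accumulates that crossing point from both sides of its local stable leaf.'' Nothing you cite yields this. The relations (\ref{ecuaC8.1})--(\ref{ecuaC8.3}) only say where points of $V^l_{\epsilon_\sigma}(l^*_{\R_\sigma})$ and $V^r_{\epsilon_\sigma}(l^*_{\R_\sigma})$ go under the forward flow; they do not produce points of $\Lambda$ in both $V^l$ and $V^r$. Likewise, Lyapunov stability gives $W^l\cup W^r\subseteq\Lambda$, i.e.\ points of $\Lambda$ \emph{after} the passage near $\sigma$, not points of $\Lambda$ on both sides of the leaf $l^*_{\R_\sigma}$ at the crossing of $\Or^+(p)$. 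Whether $\Lambda$ meets one or both sides of that singular leaf is exactly the boundary-type/interior dichotomy for $\sigma$, and for a boundary-type singularity --- the very situation of the paper's main theorem --- $\Lambda$ may meet only one side, so $p$ with $\omega(p)=\{\sigma\}$ can perfectly well be a side point in the absence of a shadowing hypothesis. In other words, the statement you are trying to prove is not expected to hold without FPOTP, and your argument cannot close precisely at this point; the FPOTP assumption (used in the paper through Lemma \ref{lemaC8.1} to force the existence of a tracing point $z$, whose location is then pinned down side-by-side and contradicted by (\ref{ecuaC8.1})) is the missing ingredient, not a removable convenience.
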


\begin{proof}[Proof]
Suppose that there is $p \in Sd(\Lambda)$ such that $\omega(p) =\{\sigma\}$, with $\sigma \in Sing(X)\cap \Lambda$. We choose $\R$ a cross section associated with $\sigma$
of the time $T_\R$, such that the positive orbit of $q$ intersect to $\R$ in $\l^*_R$.\\ 

Since $\Lambda$ is a Lyapunov stable $W^{uu}(\sigma) \subseteq \Lambda$. We choose $\gamma >0$ such that $X_{[-T_\R,T_\R]}(\R) \subseteq B_\gamma(\R)$. On the other hand, 
we have that there exists $T_p>0$ such that $X_{T_p}(p) \in \l^*_\R$.\\

We define:
$$4\epsilon=\min \left\lbrace \epsilon_p,\gamma,\epsilon_\sigma, \beta_\sigma \right\rbrace,$$
and 
$$2T=\min\left\lbrace T_p ,T_\sigma \right\rbrace.$$

By hypothesis $\Lambda$ has FPOTP, so using the Lemma \ref{lemaC8.1}, by $\epsilon$ and $T$ there is $\delta>0$ such that every finite $(\delta,T)$-chain is $\epsilon$-traced, without 
loss of generality, we can assume that $\delta < \frac{\epsilon}{2}$.\\

We will build a $(\delta,T)$-chain as follows.\\

We consider $x_0=p$, by the tubular flow Theorem $X_{T_p}(B^+_\epsilon(p))$ 
intersect continuously only one of the components of
$B_\epsilon(X_{T_p}(p)) \setminus \F^s_\epsilon(X_{T_p}(p))$ that we call $B^*_\epsilon(X_{T_p}(p))$,  without loss of generality, 
we can assume that this component intersect $V^r_\epsilon (l^*_{\R})$ but not $V^l_\epsilon (l^*_{\R})$.\\ 

We take $t_0>T$ such that $X_{t_0}(x_0) \in B_{\frac{\delta}{2}}(\sigma)$, and finally, we choose  $x_1 \in W^l_{\beta_{\sigma}} \cap B_{\frac{\delta}{2}}(\sigma)$ 
and $t_1>T$ such that $X_{t_1}(x_1)=y^l_\sigma$.\\

We have that $\{x_i,t_i\}_0^1$ is a finite $(\delta,T)$-chain 
then exists $z \in \Lambda$ y $g\in Rep$ 
such that $$d(x_0*t,X_{g(t)}(z))<\epsilon, \text{ for every } t\in [0,t_0+t_1]$$

Taking $t=0$ we have that: $z \in B_\epsilon(p)$, so $z \in \F^s_\epsilon(p) \text{ or } z\in B^+_\epsilon(p)$, but if $z\in \F^s_\epsilon(p)$, then taking
$t=t_0+t_1$, we have that $X_{g(t_0+t_1)}(z) \in B_{\beta_\sigma}(y^l_\sigma)\cap \mathcal{O}^+(l^*_\R)$ and this is a contradiction of \ref{ecuaC8.1}. Therefore, 

\begin{align}\label{ecuaC8.4}
\hspace{1cm} z \in B^+_\epsilon(p)
\end{align}
Now, taking $t=T_p$, we have that $X_{g(T_p)} \in B_\epsilon(X_{T_p}(p))$, since the flow is injective, $X_{[0,g(T_p)]}(z)$ can not intersect to $X_{[0,T_p]}(\F^{ss}(p)) \cup 
\F^s_\epsilon(p) \cup \F^s_\epsilon(X_{T_p}(p))$, 
so by \ref{ecuaC8.4}: 
$$X_{g(T_p)}(z) \in B^*_\epsilon(X_{T_p}(p))$$
Now, since  $B^*_{\epsilon}(X_{T_p}(p))$,  intersect $V^r_\epsilon (l^*_{\R_{\sigma}})$ but not $V^l_\epsilon (l^*_{\R_{\sigma}})$, $\epsilon< \epsilon_{\sigma}$ and 
$\epsilon < \gamma$, we have that:
$$X_{g(T_p)}(z) \in X_{[-T_\R,T_\R]}(V_\epsilon^r(l^*_{\R}))$$
so by \ref{ecuaC8.3}
\begin{align}\label{ecuaC8.5}
\hspace{1cm}X_{(0,T_{\sigma})}(X_{g(T_p)}(z)))\cap  B_{\beta_\sigma}(y^r_\sigma) \neq \emptyset \text{, } X_{(0,T_{\sigma})}(X_{g(T_p)}(z)))\cap  B_{\beta_\sigma}(y^l_\sigma) = \emptyset
\end{align}
On the other hand, taking $t= t_0+t_1$, $X_{g(t_0+t_1)}(z) \in B_\epsilon(y^l_\sigma)$, and since $\beta_{\sigma}>\epsilon$, by \ref{ecuaC8.5}:
$$g(t_0+t_1)>T_{\sigma} + g(T_p)$$
so for every  $r \in [0, T_{\sigma}]$ we have that $g^{-1}(r+g(T_p))$ is well defined. Therefore by \ref{ecuaC8.5}, there is $T \in (0,T_{\sigma})$, such that:
$$X_{g(g^{-1}(T +g(T_p))}(z) = X_{T +g(T_p)}(z) \in B_{\epsilon_{\sigma}}(y^r_\sigma).$$ 
with $T_p < g^{-1}(T +g(T_p))<t_0+t_1$, then 
$$d(X_{g(g^{-1}(T +g(T_p))}(z),x*(T_p+T))<\epsilon < \frac{\beta_{\sigma}}{4}$$
so
$$d(y^r_\sigma,x*(T_p+T))<\frac{3\beta_{\sigma}}{4}$$
we know that 
$$ x*(T_p+T) \in  \Or^+(l^*_{\sigma}) \cup (\sigma,y^l_\sigma)$$
but by definition of $(\sigma,y^r_\sigma)$, $(\sigma,y^l_\sigma)$ and \ref{ecuaC8.1}
$$(\Or^+(l^*_{\sigma}) \cup (\sigma,y^l_\sigma))\cap B_{\beta_\sigma}(y^r_\sigma) =\emptyset.$$
This is a contradiction and this case of theorem follows.\\

The proof in the case that $\omega(p)=\Or(q)$ with $q$ a periodic point, is analogous to the previous case, due to, it has similar conditions, such as, the existence and dimension of the strong unstable manifold by the
hyperbolic structure, and linearized local behavior. Since $p$ is a side point, when accumulates $\Or(q)$ the points in $B^+_\epsilon(p)$ only can approximate just one of the  branches of $W^ {uu} (q) $ without intersect $\F^s (q) =\F^s (p)$, so a $(\delta,T)$-chain building with the orbit of $p$ and the branch 
of the stronger unstable manifold that is not accumulated, does not cant shadowed; and we obtain the result. 
\end{proof}

\section{Proof of the Main Theorem}

\begin{teor}\label{teorC8.6}
Let $\Lambda$ a codimension one transitive sectional hyperbolic Lyapunov stable set, such that every point in $\Lambda$ can be approximated by points of $\Lambda$ for which the
omega limit set is a singularity. If $\Lambda$ has FPOTP, then $X_t(Sd(\Sigma)) \subseteq Sd(\Sigma)$ for all $t \geq 0$ and 
$$\left( \bigcup_{p \in Sd(\Lambda)} \F^{ss}(p) \right)\cap \left( \bigcup_{q \in \text{2-}Sd(\Lambda)}\F^{ss}(q)\right)=\emptyset $$

\end{teor}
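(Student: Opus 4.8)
The statement has two parts: (i) positive invariance of $Sd(\Lambda)$ under the flow, and (ii) disjointness of the strong-stable saturations of $Sd(\Lambda)$ and $\text{2-}Sd(\Lambda)$. I would prove (i) first and then deduce (ii) from it together with Lemma \ref{lemaC8.2} and Theorem \ref{teorC8.4}.

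For part (i), fix $p\in Sd(\Lambda)$ and $t_0>0$; I want to show $X_{t_0}(p)\in Sd(\Lambda)$. By the tubular flow theorem (exactly as invoked inside the proof of Theorem \ref{teorC8.4}), the flow carries $B^+_\epsilon(p)$ into one of the two connected components of $B_\epsilon(X_{t_0}(p))\setminus\F^s_\epsilon(X_{t_0}(p))$, call it $B^*_\epsilon(X_{t_0}(p))$, and for $\epsilon$ small this is continuous and injective; so $\Lambda$ certainly meets $B^*_\epsilon(X_{t_0}(p))$ (because $X_{t_0}(\Lambda\cap B^+_\epsilon(p))\subseteq\Lambda$). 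The only thing that can go wrong is that $\Lambda$ also meets the other component, i.e. $X_{t_0}(p)\in\text{2-}Sd(\Lambda)$. To exclude this I would argue by contradiction and build a finite $(\delta,T)$-chain that cannot be $\epsilon$-shadowed, imitating the construction in Theorem \ref{teorC8.4}: use the hypothesis that $p$ can be approximated by points whose omega-limit is the singularity $\sigma$ to run the orbit of $p$ near $\sigma$, then jump onto one branch of $W^{uu}(\sigma)$ via the $W^l_{\beta_\sigma}$/$W^r_{\beta_\sigma}$ machinery and equations \eqref{ecuaC8.1}--\eqref{ecuaC8.3}; a shadowing point $z$ must start in $B^+_\epsilon(p)$, hence its image at time $g(T_p)$ lands in $B^*_\epsilon(X_{T_p}(p))$, forcing it toward the wrong branch $y^r_\sigma$ and contradicting \eqref{ecuaC8.1}. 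The point of assuming $X_{t_0}(p)\in\text{2-}Sd(\Lambda)$ is that it supplies a genuine second direction in $\Lambda$ at $X_{t_0}(p)$, which is what lets the ``wrong branch'' chain be built; without it the chain does not exist (that is the FPOTP-compatible example mentioned after Definition \ref{definC8.9}).

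For part (ii), suppose for contradiction that $p\in Sd(\Lambda)$, $q\in\text{2-}Sd(\Lambda)$ and $\F^{ss}(p)\cap\F^{ss}(q)\neq\emptyset$. Since $\F^{ss}$ leaves of points in a sectional-hyperbolic set are either equal or disjoint (the strong-stable foliation statement recalled after Lemma \ref{lema1}), this forces $\F^{ss}(p)=\F^{ss}(q)$, so there are times $s,s'$ with $X_s(p)$ and $X_{s'}(q)$ on a common strong-stable leaf, and in fact $X_{s}(p)\in\F^{ss}(X_{s'}(q))$; pushing forward we may as well assume $p$ and $q$ lie on the same leaf $\F^{ss}(p)$. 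Now use part (i): all forward images $X_t(p)$, $t\ge0$, remain in $Sd(\Lambda)$. But $d(X_t(p),X_t(q))\to0$ since $q\in\F^{ss}(p)$, and being a side point is an ``open-on-one-side'' condition that passes to limits — more precisely, for $t$ large $X_t(p)$ lies in an arbitrarily small ball around $X_t(q)$, and since $q\in\text{2-}Sd(\Lambda)$ the set $\Lambda$ meets both components of $B_\epsilon(X_t(q))\setminus\F^s_\epsilon(X_t(q))$ for every $\epsilon$; transporting this through the (continuous, $C^1$) strong-stable holonomy to $X_t(p)$ shows $\Lambda$ meets both components at $X_t(p)$ too, i.e. $X_t(p)\in\text{2-}Sd(\Lambda)$, contradicting $X_t(p)\in Sd(\Lambda)$.

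**The main obstacle.** The delicate point is making the last ``transport through the strong-stable foliation'' argument airtight: one must check that the local picture $B_\epsilon(x)\setminus\F^s_\epsilon(x)$ with its two components depends continuously on $x$ along the leaf in the right way, and that $\Lambda$-intersection with a given component is preserved under holonomy — this uses codimension one (so $\F^s_\epsilon$ really separates small balls) and the invariance of $\Lambda$, but it needs care about uniformity of $\epsilon$ as $t\to\infty$. I expect the cleanest route is to phrase it as: bi-side is a $\F^{ss}$-saturated, flow-invariant, closed-under-the-relevant-limits property, so $q\in\text{2-}Sd(\Lambda)$ together with $p$ on the same leaf and part (i) are incompatible; the chain-building sub-lemma for part (i) is essentially a rerun of Theorem \ref{teorC8.4}'s proof and should be quoted rather than repeated.
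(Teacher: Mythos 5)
Your part~(i) has the right overall mechanism (a finite chain that cannot be shadowed, with the tracing point trapped on one side of the stable leaves by injectivity of the flow, while the chain ends on the ``wrong'' branch of $W^{uu}(\sigma)$), which is indeed how the paper argues; but the chain you actually describe is not the one that works. You apply the approximation hypothesis at $p$ and speak of running ``the orbit of $p$ near $\sigma$''. The orbit of $p$ does not approach $\sigma$: by Theorem~\ref{teorC8.4}, $\omega(p)$ is not a closed orbit. Moreover, approximating $p$ itself by a point $x_1$ with $\omega(x_1)$ a singularity is useless for the trapping step: since $p$ is a side point, \emph{every} point of $\Lambda$ near $p$ lies in $B^+_\epsilon(p)\cup\F^s_\epsilon(p)$, i.e.\ on the same side where any tracing point $z$ must start, so the flowed image of $B^+_\epsilon(p)$ straddles $l^*_{\R_\sigma}$ near the crossing of the orbit of such an $x_1$, the side of $z$ at the cross section is not determined, and such a chain can in general be shadowed by a true orbit close to that of $x_1$. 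What the paper does is: flow $p$ to the (assumed) bi-side point $q=X_{T_p}(p)$, note that $X_{T_p}(B^+_\epsilon(p))$ meets only one component $B^*_\epsilon(q)$ of $B_\epsilon(q)\setminus\F^s_\epsilon(q)$, and apply the approximation hypothesis \emph{inside the other component} (nonempty in $\Lambda$ exactly because $q$ is bi-side) to choose $x_1$ there with $\omega(x_1)$ a singularity; only then is the forward image of $z$ confined to the $q$-side of $\F^s(x_1)$, hence to $X_{[-T_\R,T_\R]}(V^r_\epsilon(l^*_{\R_\sigma}))$ say, hence forced near $y^r_\sigma$ by \eqref{ecuaC8.3}, while the chain ends at the other branch, contradicting \eqref{ecuaC8.1}. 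Your closing remark about the ``second direction'' is the right idea, but the point with singular omega-limit must be selected in that second direction at $X_{T_p}(p)$, not near $p$.

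Part~(ii) is where the genuine gap lies: the paper does not deduce the disjointness statement from positive invariance of $Sd(\Lambda)$, and your holonomy/limit deduction does not go through. First, $\Lambda$ is not invariant under strong-stable holonomy, so you cannot ``transport'' $\Lambda$-points from the two components at $X_t(q)$ to the corresponding components at $X_t(p)$. If instead you merely observe that those same $\Lambda$-points lie in $B_{\epsilon'}(X_t(p))$ because $X_t(p)$ and $X_t(q)$ are close, you obtain that $\Lambda$ meets both components at $X_t(p)$ only for scales $\epsilon'$ larger than $d(X_t(p),X_t(q))$; this neither shows $X_t(p)\in\text{2-}Sd(\Lambda)$ (which requires \emph{all} scales) nor contradicts $X_t(p)\in Sd(\Lambda)$, since the witness $\epsilon$ in Definition~\ref{definC8.9} may be far smaller than $d(X_t(p),X_t(q))$ for every fixed $t$; there is no ``passes to limits'' property of side points available here. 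The paper instead proves the second assertion by a second chain construction of exactly the same type as the first: flow $p$ to a time $t_0$ at which $X_{t_0}(q)\in\F^s_\epsilon(X_{t_0}(p))$ and $d(X_{t_0}(p),X_{t_0}(q))<\delta/4$, use positive invariance of $\text{2-}Sd(\Lambda)$ together with the approximation hypothesis to pick $x_1$ with singular omega-limit in the component of $B_{\delta/4}(X_{t_0}(q))\setminus\F^s_{\delta/4}(X_{t_0}(q))$ opposite to the image of $B^+_\epsilon(p)$, and then repeat the wrong-branch contradiction with the chain $\{x_i,t_i\}_0^2$. To make your proposal correct you should replace the holonomy step by this direct construction; as written, the displayed disjointness is not established.
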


\begin{proof}[Proof]
First we prove that $Sd(\Lambda)$ is a invariat set.\\

If $\Lambda$ has no singularities, then by hyperbolic Lemma is hyperbolic set and by Lemma \ref{lemaC8.2}, $Sd(\Lambda)= \emptyset$ and the result is trivially.\\

We assume that $\Lambda$ has a singularities. we have that $W^u(\sigma) \subseteq \Lambda$ for all $\sigma \in Sing(X) \cap \Lambda$, due to, $\Lambda$ is Lyapunov stable.\\ 

Suppose that exists $p \in Sd(\Lambda)$ and $t>0$ such that $X_t(p) \notin Sd(\Lambda)$. Since $p\in Sd(\Sigma)$, then exists $\epsilon_p>0$ such that 
$B^-_{\epsilon_p}(p)\cap \Lambda = \emptyset$. By Theorem \ref{teorC8.4}, $p \neq \{\sigma\}$ for all $\sigma \in Sing(x)$.\\

We consider for every singularity $\sigma \in \Lambda$, $\R'_\sigma$ a singular cross section associated with $\sigma$ far enough from $p$, we take 
$\R=\bigcup_{\sigma \in Sing(X) \cap \Lambda} \R_\sigma$; if $\R$ is the time $T_\R$, we choose $\gamma >0$ such that $X_{[-T_\R,T_\R]}(\R) \subseteq B_\gamma(\R)$.\\  

We define:
$$4\epsilon=\min \left( \left\lbrace \epsilon_p,\gamma \right\rbrace \cup \left( \bigcup_{\sigma \in Sing(x) \cap \Lambda} \{\epsilon_\sigma, \beta_\sigma\} \right)
\right),$$
and 
$$2T=\min
\left( \bigcup_{\sigma \in Sing(x) \cap \Lambda} \{T_\sigma\} \right)
.$$

By hypothesis $\Lambda$ has FPOTP, so using the Lemma \ref{lemaC8.1} by $\epsilon$ and $T$ there is $\delta>0$ such that every finite $(\delta,T)$-chain is $\epsilon$-traced, without loss of generality, 
we can assume that $\delta < \frac{\epsilon}{2}$.\\

We will build a $(\delta,T)$-chain as follows.\\

We consider $x_0=p$. Since, $p \notin W^s(Sing(X)$ and there exists $t>0$ with $X_t(p) \in \text{2-}Sd(\Sigma)$ and this set is positively invariant, then there is 
$T_p>T$ such that $q=X_{T_p}(p) \in \text{2-}Sd(\Lambda) \setminus B_{\epsilon_p}(p)$ and $q$ far enough $\R$, we choose $t_0=T_p>T$. By the tubular flow Theorem $X_{t_0}(B^+_\epsilon(p))$ 
intersect continuously only one of the components of
$B_\epsilon(q) \setminus \F^s_\epsilon(q)$ that we call $B^*_\epsilon(q)$.\\ 

Since that $q \in \text{2-}Sd(\Lambda)$ by hypothesis, we have that 
in both connected component of $B_\epsilon(q) \setminus \F^s_\epsilon(q)$ exits points in $\Lambda$ for which the omega limit is a singularity, choose $x_1$ one of these
points such that $x_1 \notin B^*_\epsilon(q)$ and $x_1 \in B_\delta(q)$. We call $B^*_\epsilon(x_1)$ the connected componet of $B_\epsilon(x_1)\setminus \F^s_\epsilon(x)$
that
contained $q$.\\

On the other hand, as $\omega(x_1) =\{\sigma\}$ for some $\sigma \in \Lambda\cap Sing(X)$, and $q$ is far from $\R$, there exists $s>0$ such that 
$X_s(x_1) \in l^*_{\R_{\sigma}}$. Using again the tubular flowh Theorem, $X_{s}(B^*_{\epsilon}(x_1))$ 
intersect continuously only one of the components of
$B_\epsilon(X_s(x_1)) \setminus \F^s_\epsilon(X_s(x_1))$, we call $B^*_{\epsilon}(X_s(x_1))$,  without loss of generality, 
we can assume that this component intersect $V^r_\epsilon (l^*_{\R_{\sigma}})$ but not $V^l_\epsilon (l^*_{\R_{\sigma}})$.\\ 

We take $t_1>T$ such that $X_{t_1}(x_1) \in B_{\frac{\delta}{2}}(\sigma)$, and finally, we choose  $x_2 \in W^r_{\beta_{\sigma}} \cap B_{\frac{\delta}{2}}(\sigma)$ 
and $t_2>T$ such that $X_{t_2}(x_2)=y^l_\sigma$.\\

We have that $\{x_i,t_i\}_0^2$ is a finite $(\delta,T)$-chain 
then exists $z \in \Lambda$ y $g\in Rep$ 
such that $$d(x_0*t,X_{g(t)}(z))<\epsilon, \text{ for every } t\in [0,t_0+t_1+t_2]$$
Next we will track the places through which the orbit of $z$ can pass.\\

Taking $t=0$ we have that: $z \in B_\epsilon(p)$, so 
\begin{align}\label{ecuaC8.6}
\hspace{1cm} z \in \F^s_\epsilon(p) \text{ or } z\in B^+_\epsilon(p)
\end{align}
Taking $t=t_0$, we have that $X_{g(t_0)} \in B_\epsilon(q)$, since 
that the flow is injective, $X_{[0,g(t_0)]}(z)$ can not intersect of a transverse way to $X_{[0,t_0]}(\F^{ss}(p)) \cup \F^s_\epsilon(p) \cup \F^s_\epsilon(q)$, 
then by \ref{ecuaC8.6} $X_{g(t_0)}(z) \in \F^s_\epsilon(q)$  or $X_{g(t_0)}(z)\in B^+_\epsilon(q)$ and as $X_1$ is in the connecte component of 
$B_\epsilon(q) \setminus \F^s_\epsilon(q)$ different of $B^*_\epsilon(q)$ and $d(x_1, q) \leq \delta < \frac{\epsilon}{2}$, we conclude that:
\begin{align}\label{ecuaC8.7}
\hspace{1cm} X_{g(t_0)}(z) \in B^*_\epsilon(x_1)
\end{align}
Taking $t=t_0+s$, $X_{g(t_0+s)} \in B_\epsilon(X_s(x_1))$, again as the flow is injective, $X_{[g(t_0),g(t_0+s)]}(z)$ can not intersect to $X_{[0,s]}(\F^{ss}(x_1)) \cup 
\F^s_\epsilon(x_1) \cup \F^s_\epsilon(X_s(x_1))$, 
so by \ref{ecuaC8.7}: 
$$X_{g(t_0+s)}(z) \in B^*_\epsilon(X_s(x_1))$$
Now, since  $B^*_{\epsilon}(X_s(x_1))$,  intersect $V^r_\epsilon (l^*_{\R_{\sigma}})$ but not $V^l_\epsilon (l^*_{\R_{\sigma}})$, $\epsilon< \epsilon_{\sigma}$ and 
$\epsilon < \gamma$, we have that:
$$X_{g(t_0+s)}(z)) \in X_{[-T_\R,T_\R]}(V_\epsilon^r(l^*_{\sigma}))\subseteq X_{[-T_{\sigma},T_{\sigma}]}(V_\epsilon^r(l^*_{\sigma'}))$$
so by \ref{ecuaC8.3}
\begin{align}\label{ecuaC8.8}
\hspace{0,5cm}{(0,T_{\sigma})}(X_{g(t_0+s)}(z)))\cap  B_{\beta_{\sigma'}}(y^r_\sigma) \neq \emptyset \text{, } X_{(0,T_{\sigma})}(X_{g(t_0+s)}(z)))\cap  B_{\beta_{\sigma}}(y^l_\sigma)= 
\emptyset
\end{align}
Taking $t= t_0+t_1+t_2$, $X_{g(t_0+t_1+t_2)}(z) \in B_\epsilon(y^l_\sigma)$, and since $\beta_{\sigma}>\epsilon$, by \ref{ecuaC8.8}:
$$g(t_0+t_1+t_2)>T_{\sigma} + g(t_0+s)$$
so for every  $r \in [0, T_{\sigma}]$ we have that $g^{-1}(r+g(t_0+s))$ is well defined. Therefore, by \ref{ecuaC8.8}, there is $T \in (0,T_{\sigma}]$, such that:
$$X_{g(g^{-1}(T +g(t_0+s))}(z) = X_{T +g(t_0+s)}(z) \in B_{\epsilon_{\sigma}}(y^r_\sigma),$$ 
with $t_0+s < g^{-1}(T +g(t_0+s))<t_0+t_1+t_2$, then 
$$d(X_{g(g^{-1}(T +g(t_0+s))}(z),x*(t_0+s+T))<\epsilon < \frac{\beta_{\sigma}}{4}$$
so
$$d(y^r_\sigma,x*(t_0+s+T))<\frac{3\beta_{\sigma}}{4}$$
we know that 
$$ x*(t_0+s+T) \in  \Or^+(l^*_{\sigma'} \cup (\sigma,y^l_\sigma)$$
but by definition of $(\sigma,y^r_\sigma)$, $(\sigma,y^l_ \sigma)$ and \ref{ecuaC8.1}
$$(\Or^+(l^*_{\sigma}) \cup (\sigma,y^l_\sigma))\cap  B_{\beta_\sigma}(y^l_\sigma) =\emptyset$$
This is a contradiction , so $Sd(\Lambda)$ is an invariant set.\\




Now, suppose that exists $p \in Sd(\Sigma))$ and $q \in \text{2-}Sd(\Sigma)$, such that $$q \in \F^{ss}(p).$$ Let $\epsilon$, $T$ and $\delta$ as in the previous without consider $\epsilon_p$. We take $x_0=p$, since $q \in \F^{ss}(p)$ then $\F^{ss}(q) = \F^{ss}(p)$ and by Lemma \ref{lemaC8.2}, $q \neq \{\sigma\}$ for all $\sigma \in Sing(X)$ and since $\F^{ss}_\Lambda$ is positively invariant and contracting, then there is $t_0 > T$ such that $d(X_{t_0}(p),X_{t_0}(p))< \frac{\delta}{4}$, $X_{t_0}(q) \in \F^s_\epsilon(X_{t_0}(p))$ and far enough from $\R$.\\

Since that $\text{2-}Sd(\Sigma)$ is positively invariant, then $X_{t_0}(q) \in \text{2-}Sd(\Lambda)$, so, we have that in both connected component of $B_{\frac{\delta}{4}}(X_{t_0}(q)) \setminus \F^s_{\frac{\delta}{4}}(X_{t_0}(q))$ there are points in $\Lambda$ for which the omega limit is a singularity. Then, we proceed equal to the first part, by choosing $x_1$, $x_2$, $t_1$ and $t_2$ and we show that the $(\delta,T)$-chain, $\{x_i,t_i\}^2_0$, not is $\epsilon$-traced. The result follows.




\end{proof}

\begin{proof}[Proof of the Main Theorem]
Let be $\sigma$ the unique singularity of $\Lambda$ and $z\in \Lambda$ such that $\omega(z)=\Lambda$. By theorem 1.1 in \cite{aml}, $\Lambda$ has a periodic point $p$. Using the orbit of $z$
we have that for every $\epsilon>0$ there is a
trajectory from a point $\epsilon$-close to $p$ to a point $\epsilon$-close to $\sigma$, therefore $\Lambda$ satisfies the conditions of the theorem 10 in \cite{bss}, so
there exists $w \in \Lambda$ such that $\alpha(w) = \alpha(p)$ and $\omega(p)$ is a singularity, and this case, necessarily, $\omega(w)=\{\sigma\}$.\\ 

Now, since the $\sigma$ is of boundary type, reasoning as in case 2 of Theorem 2.2 in \cite{bs}, we have that there exists $\Sigma$ a singular cross section associated with $\sigma$, such that every leaf of $F^s_\Sigma$ have a point in 2-$Sd(\Lambda)$.\\

Suppose that exists a point in $q \in Sd(\Lambda)$, and $\Lambda$ has FPOTP, by corollary \ref{coroC7.5} and Theorem \ref{teorC8.6}, we have that
$X_t(q)\in Sd(\Lambda)$ for all $t>0$.\\ 

On the other hand, we consider $\omega(q)$. If $ \sigma \in \omega(q)$, then the positive orbit of $q$ intersect $\Sigma$, then there is $T>0$ and $y \in \text{2-}Sd(\Lambda)$,
such that
\begin{align}\label{ecuaC8.9}
\hspace{1cm} x = X_T(q) \in \F^{ss}(y)
\end{align}

but it contradicts the Theorem  \ref{teorC8.6}, since $x \in Sd(\Lambda)$.\\

Now, if $\sigma \notin \omega(q)$ then $\omega(q)$ is hyperbolic set, then we take $s \in \omega(q)$ and we can build a cross section $\Sigma_2$, by using the $W^{uu}(s)$ and the strong stable manifold of the points in it. So, every leaf in $\Sigma_2$ has a point in 2-$Sd(\Lambda)$, when the positive orbit of $q$ accumulate $s$, intersect $\Sigma_2$. 
Therefore, we conclude again (\ref{ecuaC8.9}), and we obtain a contradiction. Thus, if $Sd(\Lambda)\neq\emptyset$, then $\Lambda$ does has not FPOTP.\\ 

Finally, we observe that since $\Lambda$ is an attractor and $\sigma$ is of boundary type, we have that for $\gamma>0$ small enough, the points on $W^l_\gamma$ and $W^r_\gamma$ are side points, and therefore $\Lambda$ does has not FPOTP.
\end{proof}

\end{document}